\def\d={\,:=\,}
\font\frakten=eufm10
\newtheorem{thm}{Theorem}
\newtheorem{lemma}[thm]{Lemma}
\newtheorem{cor}[thm]{Corollary}
\newtheorem{prop}[thm]{Proposition}
\newtheorem{Defn}[thm]{Definition}
\newtheorem{Ex}[thm]{Example}
\newtheorem{Rem}[thm]{Remark}
\newtheorem{Exs}[thm]{Examples}
\newtheorem{Rems}[thm]{Remarks}
\newtheorem{Defrem}[thm]{Definition and Remark}
\newtheorem{Remnt}[thm]{}
\newenvironment{rem}
 {\begin{Rem} \begin{rm}} {\end{rm} \hfill $\Box$ \end{Rem}}
\begin{document}

\author{Hartmut~F{\"u}hr, Jun~Xian}

\address{H.~F{\"u}hr\\Lehrstuhl A f{\"u}r Mathematik
 \\RWTH Aachen University
 \\
D-52056 Aachen\\
Germany.} \email{fuehr@mathA.rwth-aachen.de}

\address{J.~Xian\\Department of Mathematics
 \\Sun
Yat-sen University
 \\
510275 Guangzhou\\
China.} \email{xianjun@mail.sysu.edu.cn}

\title[Relevant sampling in shift-invariant spaces]{Relevant sampling in finitely generated shift-invariant spaces}

\keywords{Random sampling, relevant sampling, shift-invariant
spaces.} \subjclass[2000]{94A20, 42C15, 60E15, 62M30.}

\date{\today}

\begin{abstract}
We consider random sampling in finitely generated shift-invariant
spaces $V(\Phi) \subset {\rm L}^2(\mathbb{R}^n)$ generated by a vector $\Phi =
(\varphi_1,\ldots,\varphi_r) \in {\rm L}^2(\mathbb{R}^n)^r$.
Following the approach introduced by Bass and Gr\"ochenig, we
consider certain relatively compact subsets $V_{R,\delta}(\Phi)$ of
such a space,
 defined in terms of a concentration inequality with respect to a
 cube with side lengths $R$. Under very mild assumptions on the generators, we show that for $R$ sufficiently large, taking $O(R^n log(R^{n^2/\alpha'}))$ many
 random samples (taken independently uniformly distributed within $C_R$) yields
 a sampling set for $V_{R,\delta}(\Phi)$ with high probability. Here $\alpha' \le n$
 is a suitable constant.
 We give explicit estimates of all involved constants in terms of the generators $\varphi_1, \ldots, \varphi_r$.
\end{abstract}

\maketitle
\section{Introduction}
Digital signal processing rests on two basic operations: sampling
and reconstruction. Sampling is the task of transforming the analog
into a digital signal. The converse process is the reconstruction of
the analog signal from the digital signal. However, these problem
cannot be solved without extra information or assumptions on the
analog signal under consideration. There exist several well-understood
ways of formulating these restrictions, eg. by assuming the analog
signal $f$ to be bandlimited, or more generally, that it belongs to
a shift-invariant space \cite{aldroubi2000beurling,
aldroubi2001nonuniform, aldroubi2004, bass2010random, BaGro_12,
eldar2009compressed, xian2014, sun2010, sun2009, xl, xs,
yang2013wei}. Bandlimited signals of finite energy are completely
characterized by their regular samples if they are taken at a
sufficiently high rate (Nyquist criterion), as described by the
famous classical Shannon sampling theorem. A more general class of
such spaces are finitely generated shift-invariant spaces of the
following type
\[\label{1.2}V(\Phi):=\left\{\sum_{k\in\mathbb{Z}^n} C^{T}(k)\Phi(\cdot-k): C\in(\ell^2)^r\right\}\] where
$\Phi=(\varphi_1,\varphi_2,\cdots,\varphi_r)^{T}$ for $\varphi_i\in
L^2(\mathbb R^n)$$ \; (i=1,2,\cdots,r$ and $1\le p\le \infty)$ is
the so-called generator of $V(\Phi)$ and
$C=\{c^1,c^2,\cdots,c^r\}$ with
$\|C\|_{(\ell^2)^r}^2=\sum_{i=1}^r\|c^i\|_{\ell^2}^2$.

In the past years, the random sampling method has been commonly used
in the field of compressed sensing \cite{candes2006robust,
eldar2009compressed} and  image processing\cite{chan2014}.  The
general context of learning from random sampling has been studied by
Cucker, Smale, Zhou, et al. (see \cite{cucker2002smale,
smale2009online}).
%In \cite{Doerr2014}, Doerr obtained the lower
% bound for the discrepancy from the random sampling point set.
Recently, the random sampling problems were studied by Bass and
Gr{\"o}chenig in the multivariate trigonometric polynomials spaces
\cite{bass2005random} and bandlimited functions spaces
\cite{bass2010random, BaGro_12}. Yang and Wei discussed the problem
when some randomly chosen samples $X=\{x_j:j\in J\}$ forms a set of
sampling in the shift-invariant space \cite{yang2013wei}.

Random sampling has become a rather active area of research.
However, so far, most results deal with functions defined on compact domains.
For functions defined on $\mathbb{R}^n$,  however,
one is faced with the dilemma of choosing a proper probability distribution for the sampling set, as there is no uniform distribution on all of $\mathbb{R}^n$.
 As a remedy to this problem, Bass and Gr\"ochenig introduced the notion of {\em relevant sampling} in \cite{bass2010random, BaGro_12}.
 Here, the random sampling sets are confined to a fixed compact subset $K$.
 As a tradeoff, the sampling results are not intended to hold for all elements of the space under consideration, but only for those functions who are concentrated (in a suitable sense) within $K$.

%We say that the shifts of $\Phi$ are stable if there exist positive
%constants $m_p$ and $M_p$ such that for all sequences $C \in
%(\ell^p)^r$, \[\label{2.1}
%m_p\|C\|_{(\ell^p)^r}\le\|\sum_{k\in\mathbb
%Z^d}C(k)^T\Phi(\cdot-k)\|_{L^p(\mathbb R^d)}\le
%M_p\|C\|_{(\ell^p)^r}.\] %For the generator of $V(\Phi)$, we always
%%assume that the shifts of $\Phi$ are stable and denote
%%$\Phi_k=\Phi(\cdot-k).$
%Throughout this paper, we always assume that the shift of $\Phi$ are
%stable and $\varphi_i$ belongs to Wiener-amalgam space $W^1$. For
%$1\le p \neq\infty$, we say a measurable function $f\in W^p$, if it
%satisfies
%$$\|f\|_{W^p}:=\sum_{k\in \mathbb
%Z^d}esssup\{|f(x+k)|^p,x\in[0,1]^d\}\le \infty.$$
% When $p=\infty$
%
%$$\|f\|_{W^p}:=\sup_{k\in\mathbb Z^d}esssup\{|f(x+k)|,x\in[0,1]^d\}\le \infty.$$ Then
%$V^p(\Phi)$ is a space of continuous functions. We denote $(W^p)^r$
%as the vector of space $\Phi=(\varphi_1,\varphi_2,\cdots,\varphi_r)$
%with norm
%$$\|\Phi\|_{(W^p)^r}=\sum_{1\le i\le r}\|\varphi_i\|_{W^p}.$$

Following the approach taken by Bass and Gr\"ochenig
\cite{bass2005random, bass2010random, BaGro_12}, we restrict
attention to the subset
\[\label{2.2}V_{R,\delta}(\Phi):=\left\{f\in V(\Phi):
\int_{C_R}|f(x)|^2dx \ge (1-\delta)\int_{\mathbb R^n}|f(x)|^2dx
\right\},\] where $C_R=[-R/2, R/2]^d$ and $0< \delta <1$. Thus
$V_{R,\delta}(\Phi)$ is the subset of $V(\Phi)$ consisting of those
functions whose energy is largely concentrated on $C_R$.

We are looking for conditions on random sets $X$ satisfying
following inequalities:
\begin{eqnarray}\label{1.1}c\|f\|_{L^2}\leq(\sum\limits_{x_j \in X}|
f(x_j)|^2)^{\frac{1}{2}}\leq C\|f\|_{L^2}.\end{eqnarray}

%It is easy to check that any $f\in V^p(\Phi, C_K, \delta)$ satisfies
%the inequalities (\ref{1.1}) if and only if $f/\|f\|_{L^p(\mathbb
%R^d)}$ does. So we may consider the subset \[\label{2.3}V^p_1(\Phi,
%C_K, \delta):=\{f\in V^p(\Phi, C_K, \delta): \|f\|_{L^p(\mathbb
%R^d)}=1\}.\]
In this paper, we pursue and extend the approach of \cite{BaGro_12} to a rather general setting of finitely generated shift-invariant spaces, with very mild conditions on the generators. While the overall proof strategy could be largely preserved, the details of the arguments in \cite{BaGro_12} often relied on the specific, well-understood setting of prolate spheroidal wave functions, and the adaptation to the general case was not straightforward. In any case, we believe that the subsequent results and arguments provide an interesting contrast and supplement to \cite{BaGro_12}.

The paper is organized as follows. In Section 2,  we introduce the
main result and conditions on the generators. In Section 3, the
localization operator associated to $V(\Phi)$ and $C_R$ is described
and proved. In Section 4, we discuss the random sampling in finite
sums of eigenspaces. At the end, the proof of main result is
presented in Section 5.

\section{Statement of the main result}

Throughout the paper, we consider a finitely generated
shift-invariant subspace $ V(\Phi) \subset {\rm L}^2(\mathbb{R}^n)$,
defined as the closed linear span of a tuple of generators
$(\varphi_1,\ldots, \varphi_r) \in {\rm L}^2(\mathbb{R}^n)^r$,
shifted by the integers. We assume that the associated system $(T_m
\varphi_i)_{i =1, \ldots, r,m \in \mathbb{Z}^d}$ is a frame for
$V(\Phi)$. Furthermore, we fix a dual frame obtained as integer shifts of the vectors
$\widetilde{\varphi}_1,\ldots,\widetilde{\varphi_r} \in {\rm
L}^2(\mathbb{R}^n)$.

Thus we obtain for all $f \in V(\Phi)$, that
\[
 f = \sum_{m,i} \langle f, T_m \widetilde{\varphi}_i \rangle
 T_m\varphi_i
\]
In the following, we will mostly work with the notation
\begin{equation} \label{eqn:p_phi}
 P_\Phi = \sum_{m,i} (T_m \varphi_i) \otimes (T_m \widetilde{\varphi}_i)~,
\end{equation} where unconditional convergence in the strong operator topology
is guaranteed by the frame properties, for all $f \in {\rm L}^2(\mathbb{R}^n)$.
The tensor product notation $v \otimes w$ refers to the rank-one
operator $(v \otimes w) f \mapsto \langle f, w \rangle v$.
Our assumptions show that $P_\Phi$ is the identity on $V(\Phi)$,
and since $(T_k \widetilde{\varphi}_i)_{k,i}$ also span $V(\Phi)$,
the kernel of $P_\Phi$ is $V(\Phi)^\perp$; thus $P_\Phi$ is the orthogonal projection onto $V(\Phi)$.
% Since $P_\Phi$ is selfadjoint, we also get
% \begin{equation} \label{eqn:p_phi_star}
% P_\Phi = \sum_{i,k} (T_k \widetilde{\varphi_i}) \otimes (T_k \varphi_i)
% \end{equation}

We let $\| x\|_{\infty}=\max(|x_1|, \cdots, |x_n|)$. Given $R>0$, we write $Q_R : {\rm L}^2(\mathbb{R}^n) \to {\rm L}^2(\mathbb{R}^n)$ for the orthogonal projection operator $f \mapsto f \cdot \chi_{C_R}$. % Our sampling results will be %formulated for the subset
% \[ V_{R,\delta}(\Phi) = \{ f \in V(\Phi): \| Q_R f \|_2^2 \ge (1-\delta) \| f \|_2^2 \}~. \]

\subsection{Assumptions and chief result}

We collect our assumptions on the generators, with the associated constants, in the following list:
\begin{enumerate}
 \item[(A.0)] {\bf [Bessel Constants]} The upper frame constants for the systems $(T_m \varphi_i)_{m,i}$ and $(T_m \widetilde{\varphi}_i)_{m,i}$ are denoted by $C_0$ and $\widetilde{C}_0$, respectively. Hence, for all $f \in {\rm L}^2(\mathbb{R}^n)$:
\[
  \sum_{m,i} |\langle f, T_m \varphi_i \rangle|^2 \le C_0 \| f \|^2~
 \]
 and
 \[
  \sum_{m,i} |\langle f, T_m \widetilde{\varphi}_i \rangle|^2 \le \widetilde{C}_0 \| f \|^2~.
 \]
 \item[(A.1)] {\bf [Reproducing Kernel]} The point evaluations are bounded linear functionals on $V(\Phi)$. Hence, using the Fischer-Riesz theorem, there exists a family $(v_x)_{x \in \mathbb{R}^d} \subset V(\Phi)$ satisfying $f(x) = \langle f, v_x \rangle$, for all $f \in V(\Phi)$. We assume that
 \[ C_1 = C_1(\Phi) = \sup_x \| v_x \|_2 < \infty ~. \] This implies in particular $\| f \|_\infty \le C_1 \| f \|_2$, for all $f \in V(\Phi)$.
 \item[(A.2)]{\bf [Plancherel-Polya-type inequality]} There exists a constant $C_2= C_2(\Phi)$ such that for every subset $\Gamma \subset\mathbb{R}^n$ with covering index
 \[
  N_0(\Gamma) = \max_{k \in \mathbb{Z}^n} ~{\rm card} \left( \Gamma \cap (k + [-1/2,1/2]^n) \right)
 \]
 and every $f \in V(\Phi)$, we have
 \[ \sum_{\gamma \in \Gamma} |f(\gamma)|^2 \le C_2 N_0(\Gamma) \| f \|_2^2 ~.\]
  \item[(A.3)]{\bf [Decay property]} There exists $\alpha>0$ and $C_3 = C_3(\Phi)$, such that for all $i = 1, \ldots, k$: $\| \varphi_i \cdot (1- \chi_{C_R})\|_2^2 \le C_3 R^{-\alpha}$.
\end{enumerate}

Clearly, the decay property is fulfilled by any vector of compactly supported functions.

\begin{rem} \label{rem:bandlimited}
Observe that the requirements are rather mild. They are in fact
fulfilled by the $n$-dimensional sinc function $\varphi$ and the
space $V(\varphi)$ of bandlimited functions: The shifts of the sinc
function provide an ONB, in particular we may take
$\widetilde{\varphi} = \varphi$. We therefore get $C_0 =
\widetilde{C_0} =1$. In addition, the sinc function acts as a
reproducing convolution kernel for $V(\varphi)$, thus (A.1) holds
with $C_1 = 1$. The Plancherel-Polya constant for $V(\varphi)$ was
explicitly computed as $C_2 = e^{n\pi}$ in the appendix of
\cite{BaGro_12}. The localization property holds with $C_3 = n$ and
$\alpha = 1$. Hence the following result indeed provides a
generalization of the main theorem \cite{BaGro_12}, although with
less sharp constants, and a slightly worse sampling rate: Instead of
$O(R^n \log(R^n))$, we obtain $O(R^n\log(R^{n^2/\alpha'}))$.
\end{rem}

Throughout this paper, we will repeatedly refer to the constants $\alpha' = \min(n,\alpha)$, for $\alpha$ from assumption (A.3), and
\begin{equation}
\label{eqn:beta}
 \beta = 3 +2 \sqrt[\alpha]{2^{n+2} r C_0 \widetilde{C}_0^2 C_3}\;.
\end{equation}

The main result of this paper is the following generalization of \cite[Theorem 1]{BaGro_12}:
\begin{thm}
 \label{thm:main}
  Assume that the frame generators fulfill assumptions (A.0)-(A.3).
 Let $(x_j)_{j \in \mathbb{N}}$ denote a sequence of independent random variables, each uniformly distributed in $C_R$.
 Let
 \[
 ~ R_0 = \max\left(1,\sqrt[\alpha]{2 C_3},\sqrt[n]{C_1^2} \right)~.
 \]
Let $R \ge R_0$, and assume that $\delta, \nu \in (0,1)$ are sufficiently small to guarantee that
\[\label{3equ}
 \frac{\nu^2}{C_1^2 (1+\nu/3)} \le 3 \log 3 -2  \mbox{ and } A = \frac{1}{2} - \delta - \nu - 12 \delta C_2 > 0~.
\]
Let $0 < \epsilon < 1$. If the number $s$ of samples satisfies
\begin{equation} \label{eqn:samp_dens}
 s \ge R^n \frac{1+\nu/3}{\nu^2} \log \frac{2 \beta^n R^{n^2/\alpha'}}{\epsilon}~,
\end{equation} then the sampling inequality
\begin{equation}
 \label{eqn:samp_ineq_main}
\forall f \in V_{R,\delta}(\Phi) ~:~ A s R^{-n} \| f \|_2^2 \le \sum_{j=1}^s |f(x_j)|^2 \le s \| f \|_2^2
\end{equation} holds with probability at least $1-\epsilon$.
\end{thm}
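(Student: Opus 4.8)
The plan is to follow the architecture of \cite{BaGro_12}: use a concentration operator to reduce the problem to a random sampling estimate on a finite‑dimensional subspace, establish that estimate by a Bernstein‑type concentration inequality, and transfer it back to $V_{R,\delta}(\Phi)$. Consider the positive self‑adjoint operator $H_R:=P_\Phi Q_R P_\Phi$ on $V(\Phi)$; its quadratic form is $\langle H_R f,f\rangle=\|Q_R f\|_2^2=\int_{C_R}|f(x)|^2\,dx$, so $f\in V_{R,\delta}(\Phi)$ means exactly $\langle H_R f,f\rangle\ge(1-\delta)\|f\|_2^2$. From (A.3) together with the Bessel bounds (A.0) one checks that $H_R$ is trace class (indeed $\tr(H_R)=\sum_{m,i}\langle\chi_{C_R}T_m\varphi_i,\,T_m\widetilde{\varphi}_i\rangle<\infty$), hence it has an orthonormal eigenbasis $(\psi_k)_k$ of $V(\Phi)$ with $\lambda_1\ge\lambda_2\ge\cdots>0$, and $P_\Phi Q_R\psi_k=\lambda_k\psi_k$. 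Set $W:=\span\{\psi_k:\lambda_k\ge 1/2\}$ and $d:=\dim W$.

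The crucial quantitative input is $d\le\beta^n R^{n^2/\alpha'}$, with $\beta$ as in \eqref{eqn:beta}. I would obtain it from the min--max principle: it suffices to produce a subspace $U\subseteq V(\Phi)$ with $\dim U\le\beta^n R^{n^2/\alpha'}$ and $\|Q_R g\|_2^2<\tfrac12\|g\|_2^2$ for every $g\in V(\Phi)$ orthogonal to $U$. Take $U$ spanned by the functions $P_\Phi(\chi_{C_R}T_m\varphi_i)$, $1\le i\le r$, over integer points $m$ with $\mathrm{dist}_\infty(m,C_R)\le\rho$, where $\rho$ is of order $R^{n/\alpha'}$; then $\dim U\le r(R+2\rho+1)^n$. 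For $g\perp U$ one has $\langle Q_R g,T_m\varphi_i\rangle=0$ for all such $m$, so expanding $g$ in its dual frame and using Cauchy--Schwarz, the Bessel bound $\widetilde{C}_0$, and the tail estimate $\sum_{\mathrm{dist}_\infty(m,C_R)>\rho}\sum_i\|\chi_{C_R}T_m\varphi_i\|_2^2\le r(R+1)^n C_3(2\rho)^{-\alpha}$ (this is where (A.3) enters), one bounds $\|Q_R g\|_2^2=\langle Q_R g,g\rangle$ by an expression that drops below $\tfrac12\|g\|_2^2$ once $\rho^\alpha$ exceeds a fixed multiple of $r\,C_0\widetilde{C}_0^2 C_3(R+1)^n$; choosing $\rho$ at this threshold and tracking constants yields $\beta$, and the exponent $n^2/\alpha'$ appears because the side length $R+2\rho+1$ is then of order $R^{n/\alpha'}$ (with $\alpha'=\min(n,\alpha)$ entering via $(R+1)^{n/\alpha}\approx R^{n/\alpha'}$). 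For the reduction, write $f=\sum_k c_k\psi_k\in V_{R,\delta}(\Phi)$ and $f^\perp:=f-P_W f=\sum_{\lambda_k<1/2}c_k\psi_k$. Since $A>0$ forces $\delta$ small, one has $(1-\delta)-\lambda_k>\tfrac12-\delta>0$ whenever $\lambda_k<1/2$, so $\langle H_R f,f\rangle\ge(1-\delta)\|f\|_2^2$ gives $\|f^\perp\|_2^2\le\tfrac{\delta}{1/2-\delta}\|f\|_2^2\le 3\delta\|f\|_2^2$ and $\|P_W f\|_2^2\ge(1-3\delta)\|f\|_2^2$. Two further facts are recorded: $\langle H_R g,g\rangle\ge\tfrac12\|g\|_2^2$, hence $\mathbb{E}|g(x_j)|^2\ge\tfrac1{2R^n}\|g\|_2^2$, for $g\in W$; and $\langle Q_R f^\perp,P_W f\rangle=0$, because $P_\Phi Q_R P_W f=\sum_{\lambda_l\ge1/2}\lambda_l c_l\psi_l$ is orthogonal to $f^\perp$.

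For the finite‑dimensional estimate, fix $g\in W$ with $\|g\|_2=1$: the variables $|g(x_j)|^2$ are i.i.d., bounded by $\|g\|_\infty^2\le C_1^2$ (by (A.1)), with mean in $[\tfrac1{2R^n},\tfrac1{R^n}]$ and variance at most $C_1^2$ times the mean. Applying a matrix Bernstein inequality to $S:=\sum_{j=1}^s(P_W v_{x_j})\otimes(P_W v_{x_j})$ on $W$ — with $\|P_W v_x\|_2^2\le C_1^2$, $\mathbb{E}S=\tfrac{s}{R^n}H_R|_W$, and $d\le\beta^n R^{n^2/\alpha'}$ — shows that the sample bound \eqref{eqn:samp_dens} forces, with probability $\ge1-\epsilon/2$, that $\tfrac{s}{R^n}(\tfrac12-\nu)\|g\|_2^2\le\sum_j|g(x_j)|^2\le\tfrac{s}{R^n}(1+\nu)\|g\|_2^2$ for all $g\in W$ simultaneously. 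Independently, the point counts in the at most $R^n$ unit cubes meeting $C_R$ are $\mathrm{Binomial}(s,R^{-n})$, and a Chernoff bound, together with the hypothesis $\tfrac{\nu^2}{C_1^2(1+\nu/3)}\le3\log 3-2$ (chosen precisely so this failure probability does not exceed the one above), gives with probability $\ge1-\epsilon/2$ the covering‑index bound $N_0(\{x_j\})\le3sR^{-n}$. On the intersection of these two events, for arbitrary $f\in V_{R,\delta}(\Phi)$ write $|f(x_j)|^2=|P_W f(x_j)|^2+2\,\mathrm{Re}\big(\overline{P_W f(x_j)}f^\perp(x_j)\big)+|f^\perp(x_j)|^2$ and sum: the first sum is $\ge\tfrac{s}{R^n}(\tfrac12-\nu)\|P_W f\|_2^2\ge\tfrac{s}{R^n}(\tfrac12-\nu)(1-3\delta)\|f\|_2^2$; the last is $\le C_2 N_0\|f^\perp\|_2^2\le 3sR^{-n}C_2(3\delta)\|f\|_2^2$ by the Plancherel--Polya inequality (A.2); and the middle sum has zero mean (since $\langle Q_R f^\perp,P_W f\rangle=0$) and, by one more Bernstein estimate using (A.1)--(A.2), fluctuates by at most a fixed multiple of $\delta\,sR^{-n}\|f\|_2^2$. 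Collecting terms and dividing by $sR^{-n}$ yields the lower constant $A=\tfrac12-\delta-\nu-12\delta C_2>0$; the upper bound $\sum_j|f(x_j)|^2\le s\|f\|_2^2$ follows the same way from the upper Bernstein bound on $W$, the estimate for $\sum_j|f^\perp(x_j)|^2$, and $R\ge R_0\ge1$.

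The two main obstacles I anticipate are: first, the dimension estimate $d\le\beta^n R^{n^2/\alpha'}$, which has to be extracted from (A.0) and (A.3) alone — the prolate‑spheroidal computations of \cite{BaGro_12} being unavailable — and which is responsible for the exponent $n^2/\alpha'$ in place of $n$; and second, the necessity of a matrix (rather than scalar) Bernstein inequality, so that $\log(\dim W)$, and not $\dim W$ itself, appears in \eqref{eqn:samp_dens} — a scalar Bernstein bound over a net of the unit sphere of $W$ would only yield a sample count linear in $\dim W$. A more delicate point in the gluing step is that the linear‑in‑$\delta$ form of $A$ hinges on the cross term $\sum_j\overline{P_W f(x_j)}f^\perp(x_j)$ having vanishing mean, so that $\|f^\perp\|_2^2=O(\delta)$ is used only through itself and never through its square root.
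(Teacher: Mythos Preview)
Your overall architecture matches the paper's: study the spectral decomposition of $A_R = P_\Phi Q_R P_\Phi$, bound $N(R) = \dim W \le \beta^n R^{n^2/\alpha'}$ via min--max and (A.3), apply a matrix Bernstein inequality on $W = \mathcal{P}_{N(R)}$, bound the covering index by a Chernoff estimate, and transfer back to $V_{R,\delta}(\Phi)$. Your sketch of the dimension estimate and the projection bounds $\|f^\perp\|_2^2 \le O(\delta)\|f\|_2^2$ correspond closely to the paper's Lemmas~\ref{lem:decay_eigenvals} and~\ref{lem:B_vs_PN}.

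There is, however, a genuine gap in your transfer step. You propose to control the cross term $\sum_{j} 2\,\mathrm{Re}\bigl(\overline{P_W f(x_j)}\,f^\perp(x_j)\bigr)$ by ``one more Bernstein estimate''. A scalar Bernstein inequality gives, for each \emph{fixed} $f$, a high-probability bound on this fluctuation; but the sampling inequality \eqref{eqn:samp_ineq_main} must hold for \emph{all} $f \in V_{R,\delta}(\Phi)$ simultaneously on a single event of probability $\ge 1-\epsilon$. The tail component $f^\perp$ ranges over an infinite-dimensional subspace of $V(\Phi)$, so no union bound over a net of that subspace is available, and the matrix Bernstein inequality you already invoked on $W$ gives no control over vectors outside $W$. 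The zero-mean observation $\langle Q_R f^\perp, P_W f\rangle = 0$, while correct, does not by itself yield a uniform deterministic bound on the sum.

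The paper handles this differently: its Lemma~\ref{lem:geometric} (a verbatim adaptation of \cite[Lemma~7]{BaGro_12}) is purely deterministic. It shows that whenever the sample set $\{x_1,\ldots,x_s\}$ satisfies the lower sampling estimate on $\mathcal{P}_N$ \emph{and} has covering index at most $N_0$, the lower inequality on all of $V_{R,\delta}(\Phi)$ follows, with the contribution of $f^\perp$ absorbed via the Plancherel--Polya inequality (A.2) in the form $\sum_j|f^\perp(x_j)|^2 \le C_2 N_0\|f^\perp\|_2^2 \le 2\delta C_2 N_0\|f\|_2^2$. Consequently only two random events are needed (the matrix Bernstein event on $\mathcal{P}_N$ and the covering-index event), not three, and the linear-in-$\delta$ constant $A = \tfrac12 - \delta - \nu - 12\delta C_2$ arises without any additional probabilistic input. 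Your intuition that linearity in $\delta$ is the delicate point is correct, but the mechanism is a deterministic geometric lemma rather than a concentration estimate for the cross term.
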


\subsection{Generators fulfilling the assumptions of Theorem \ref{thm:main}}

As will be seen shortly, our main result applies to large classes of generators.
For the formulation of the following result, recall the definition of the Wiener amalgam spaces:
Given a function $f: \mathbb{R}^n  \to \mathbb{C}$, we define its Wiener amalgam norm, for $1 \le p < \infty$, via
\[
\| f \|_{W(L^p)}^p = \sum_{k \in \mathbb{Z}^n} {\rm ess\; sup}_{x
\in [0,1]^n} |f(x+k)|^p\;\;,
\]
and denote the space of all continuous $f$ for which this norm is
finite by $W_0(L^p)$. It has been noted in
\cite{aldroubi2001nonuniform} that Wiener amalgam norms are useful
tools for the study of sampling problems, and the following results
provide further evidence for this principle.

\begin{prop}
Assume that $\Phi \in (W_0(L^1))^r$. Then assumptions (A.1) and
(A.2) are fulfilled.
\end{prop}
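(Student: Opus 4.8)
The plan is to show that the Wiener amalgam condition $\Phi \in (W_0(L^1))^r$ forces both a uniform bound on the reproducing kernel and a Plancherel--Polya estimate, by passing through the analysis and synthesis operators of the frame $(T_m \varphi_i)$ and exploiting the standard convolution-type inequalities for amalgam spaces. First I would record the elementary fact that the synthesis map $C \mapsto \sum_{m,i} C_i(m)\, T_m \varphi_i$ is bounded from $(\ell^2)^r$ into $W_0(L^2)$ whenever $\Phi \in (W_0(L^1))^r$: this is a consequence of the convolution relation $W(L^1) * W(L^2) \subseteq W(L^2)$ (more precisely, the discrete-continuous version $\ell^2(\ZZ^n) * W(L^1) \subseteq W(L^2)$), together with the observation that $\sup_{x \in [0,1]^n}|\sum_k a(k) g(x+k)|$ is controlled by $\|a\|_{\ell^2}$ times a Wiener-type norm of $g$. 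Since every $f \in V(\Phi)$ has such a representation with $\|C\|_{(\ell^2)^r} \lesssim \|f\|_2$ (using that $(T_m\varphi_i)$ is a frame, so the canonical coefficients are square-summable with norm comparable to $\|f\|_2$), this yields $\|f\|_{W(L^2)} \le M \|f\|_2$ for a constant $M = M(\Phi)$ depending only on the amalgam norms of the $\varphi_i$ and the frame bounds.

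Granting the amalgam bound $\|f\|_{W(L^2)} \le M\|f\|_2$ for all $f \in V(\Phi)$, assumption (A.1) follows because $\|f\|_\infty \le \|f\|_{W(L^\infty)} \le \|f\|_{W(L^2)} \le M \|f\|_2$; continuity of $f$ (hence well-definedness of point evaluation) comes from $f \in W_0(L^2) \subseteq C(\RR^n)$, and the Fischer--Riesz argument already sketched in the statement of (A.1) then produces the vectors $v_x$ with $\sup_x \|v_x\|_2 = \sup_x \sup_{\|f\|_2 \le 1}|f(x)| = \sup_{\|f\|_2\le 1}\|f\|_\infty \le M < \infty$, so one may take $C_1 = M$. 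For (A.2), given $\Gamma \subset \RR^n$ with covering index $N_0(\Gamma)$, I would split $\Gamma$ into the pieces $\Gamma \cap (k + [-1/2,1/2)^n)$ and bound
\[
\sum_{\gamma \in \Gamma} |f(\gamma)|^2 \le \sum_{k \in \ZZ^n} N_0(\Gamma) \, \Big(\operatorname{ess\,sup}_{x \in k + [-1/2,1/2)^n} |f(x)|\Big)^2 = N_0(\Gamma)\, \|f\|_{W(L^2)}^2 \le N_0(\Gamma)\, M^2 \|f\|_2^2,
\]
so (A.2) holds with $C_2 = M^2$. (A minor wrinkle is that the cube in the definition of $\|\cdot\|_{W(L^p)}$ is $[0,1]^n$ while the covering index uses $[-1/2,1/2]^n$; since these amalgam norms are equivalent under any choice of unit cube, with an explicit constant, this causes no real difficulty.)

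The main obstacle is the first step: proving the synthesis bound $\|\sum_{m,i} C_i(m) T_m\varphi_i\|_{W(L^2)} \lesssim \|C\|_{(\ell^2)^r}$ cleanly, with a constant one can actually name in terms of $\|\varphi_i\|_{W(L^1)}$. The natural route is to dominate $\sup_{x\in[0,1]^n}|\sum_k a(k)\varphi(x+k+j)|$ by $\sum_k |a(k)| \, w_\varphi(j-k)$ where $w_\varphi(\ell) = \sup_{x}\sum_{\text{nearby cubes}} \operatorname{ess\,sup}|\varphi|$ is (up to a bounded number of shifts) the sequence appearing in $\|\varphi\|_{W(L^1)}$; then Young's inequality for $\ell^1 * \ell^2 \subseteq \ell^2$ closes the estimate. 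Some care is needed because a single translate $T_m\varphi$ contributes mass to finitely many unit cubes, so the bookkeeping involves a fixed combinatorial overhead (a constant depending only on $n$), but nothing deeper. Once this is in place, everything else is the routine unwinding above, and I would state the resulting constant as $C_1 = M$, $C_2 = M^2$ with $M$ expressed through $\sum_i \|\varphi_i\|_{W(L^1)}$ and the upper frame bound $C_0$ together with the lower frame bound of $(T_m\varphi_i)$.
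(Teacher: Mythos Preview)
Your proposal is correct and proceeds along a genuinely different line from the paper's proof. Your unifying step is the synthesis estimate $\|f\|_{W(L^2)} \le M\|f\|_2$ for all $f \in V(\Phi)$, obtained by Young's inequality $\ell^2 * \ell^1 \subset \ell^2$ applied to the sequence of local suprema of $\varphi_i$; from this single inequality both (A.1) and (A.2) drop out immediately via $\|f\|_\infty \le \|f\|_{W(L^2)}$ and $\sum_\gamma |f(\gamma)|^2 \le N_0(\Gamma)\|f\|_{W(L^2)}^2$. The paper instead treats (A.1) and (A.2) separately: for (A.1) it writes down the reproducing kernel $v_x = \sum_{k,i} \overline{\varphi_i(x-k)}\, T_k\widetilde{\varphi}_i$ explicitly and bounds $\|v_x\|_2$ using the Bessel constant of the dual system and $\sum_i \|\varphi_i\|_{W(L^2)}^2$; for (A.2) it invokes an oscillation estimate $\|{\rm osc}\,f\|_2 \le M\|f\|_2$ from \cite{aldroubi2004} and compares $|f(\gamma)|^2$ to the integral of $|f|^2$ over the nearby unit cube. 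Your route is more economical and self-contained for (A.2), avoiding the oscillation lemma entirely, and yields the cleaner constant $C_2 = M^2$; the paper's route gives the reproducing kernel $v_x$ in explicit form, which is not needed here but could be useful elsewhere. One small correction to your bookkeeping: the $\ell^2$ bound on the frame coefficients $c_{m,i} = \langle f, T_m\widetilde{\varphi}_i\rangle$ comes from the Bessel constant $\widetilde{C}_0$ of the dual system (equivalently, the reciprocal of the lower frame bound for the canonical dual), not from $C_0$; so your constant $M$ is governed by $\widetilde{C}_0^{1/2}\sum_i \|\varphi_i\|_{W(L^1)}$ rather than by $C_0$.
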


\begin{proof}
At least formally, the reproducing kernel can be obtained in a straightforward way from (\ref{eqn:p_phi}):
For all $f \in V(\Phi)$, we have
\begin{eqnarray*}
 f(x) & = & \sum_{k,i} \langle f, T_m \widetilde{\varphi}_i \rangle T_m \varphi(x) \\
  & = & \sum_{k,i} \int_{\mathbb{R}^n} f(y) \overline{\widetilde{\varphi}_i(y-k)} dy \varphi_i(x-k) \\
  & = & \int_{\mathbb{R}^n} f(y) \overline{v_x(y)} dy~,
\end{eqnarray*}
where
\[
 v_x (y) = \sum_{k,i} \widetilde{\varphi}_i(y-k) \overline{\varphi_i(x-k)}~.
\]
Now \cite{aldroubi2004} establishes that the sum actually converges
and yields a square-integrable $v_x$. %(Note that the statement there
%was only formulated for the single generator case, but the argument
%carries over immediately to the general case.)
Furthermore, the fact
that the translates of $\Phi$ are Bessel sequence allow to conclude
that
\begin{eqnarray*}
\sup_{x \in \mathbb{R}^n} \| v_x \|_2^2 & \le &  C_0 \sup_{x \in \mathbb{R}^n} \| (\varphi_i(x-k))_{i,k}\|_{\ell^2(\mathbb{Z}^n)^r}^2 \\
& = & C_0 \sup_{x \in [0,1]^n} \sum_{i,k} | \varphi_i(x-k)|^2 \\
& \le & C_0 \sum_{i,k}  \sup_{x \in [0,1]^n}  | \varphi_i(x-k)|^2 \\
& = & C_0 \sum_{i} \| \varphi_i \|_{W(L^2)}^2 \\
& \le & C_0 \sum_{i} \| \varphi_i \|_{W(L^1)}^2 < \infty,
\end{eqnarray*}
using the norm-decreasing inclusion $W_0(L^1) \subset W_0(L^2)$.
This yields assumption (A.1) with constant
\[
C_1 = \left( C_0 \sum_{i=1}^r \| \varphi_i \|_1^2 \right)^{1/2}\;\;.
\]

For the proof of the Plancherel-Polya inequality, first note that it
is sufficient to consider sets $\Gamma \subset \mathbb{N}$ with $N_0(\Gamma) =1$; the more general statement then follows from writing a set $\Gamma$ with $N_0(\Gamma) = K$ as the union of $K$ sets with density one.

 Given $f \in V(\Phi)$, define ${\rm osc} (f): \mathbb{R}^n \to \mathbb{R}_0^+$ as
 \[
  {\rm osc} (f)(x) = \sup_{\| y \|_\infty \le 1/2} | f(x)-f(x+y)| ~.
 \]
 By \cite{aldroubi2004} there exists a constant $M>0$ such that
\[
  \forall f \in V(\Phi)~:~ \|{\rm osc}  f \|_2 \le M \| f \|_2~.
 \] %Again, the referenced statement is formulated only for the single-generator case, but extends verbatim to the general setting.

Now let $\Lambda = \{ k \in \mathbb{Z}^n: \Gamma \cap k + [-1/2,1/2)^n \not= \emptyset \}$. By essential disjointness of the shifted cubes, we have that
 \[
  \sum_{k \in \Lambda} \int_{k +  [-1/2,1/2]^n} |f(x)|^2 dx \le \| f \|_2^2~.
 \]
We can relate this sum to $\sum_{\gamma \in \Gamma} |f(\gamma)|^2$ as follows:
 For each $\gamma \in \Gamma$ pick a $k_\gamma \in \Lambda$ such that $\gamma \in k_\gamma +  [-1/2,1/2)^n$. $k_\gamma$ is uniquely determined, and by assumption on $\Gamma$, we get that $\Gamma \ni \gamma \mapsto k_\gamma$ is one-to-one. We then get the following series of estimates:
 \begin{eqnarray*}
  \sum_{\gamma \in \Gamma} \left| |f(\gamma)|^2 - \int_{k_\gamma +  [-1/2,1/2]^n} |f(x)|^2 dx \right|& \le &
  \sum_{\gamma \in \Gamma} \left| \int_{k_{\gamma} + [-1/2,1/2]^n} |f(\gamma)|^2-|f(x)|^2 dx  \right| \\
  & \le & \sum_{\gamma \in \Gamma} \int_{k_\gamma + [-1/2,1/2]^n} \left|  |f(\gamma)|^2-|f(x)|^2 \right| dx \\
  & \le & \sum_{\gamma \in \Gamma} \int_{k_\gamma + [-1/2,1/2]^n} |f(\gamma) - f(x)| ~(|f(\gamma)|+|f(x)| )dx \\
  & \le & \int_{\mathbb{R}^n} {\rm osc} f(x) \left( 2|f(x)| + {\rm osc} f(x) \right) dx \\
  & \le & \| {\rm osc}f \|_2 \left( 2 \| f \|_2 + \| {\rm osc} f \|_2 \right) \\
  & \le & M(M+2) \|f \|_2^2~
 \end{eqnarray*}
But this implies
\[
 \sum_{\gamma \in \Gamma}  |f(\gamma)|^2 \le M(M+2) \| f \|_2^2 +  \sum_{k \in \Lambda} \int_{k +  [-1/2,1/2]^n} |f(x)|^2 dx \le (M+1)^2 \| f \|_2^2 ~,
\] and the Plancherel-Polya inequality is established.
\end{proof}

We thus obtain easily checked criteria in terms of continuity and moderate decay:
\begin{cor}
Assume that $\Phi$ is vector of functions generating a frame under shifts, and consisting of continuous functions satisfying the decay estimate
\[
\forall i=1,\ldots, r~:~|\varphi_i(x)| \le C (1+\|x \|_{\infty})^{-n-\epsilon} \;\;,
\] for some $\epsilon>0$. Then conditions (A.1) through (A.3) are fulfilled, with $\alpha=n$.
\end{cor}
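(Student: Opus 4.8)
The plan is to derive (A.1) and (A.2) from the preceding Proposition, by verifying that the hypotheses force $\Phi \in (W_0(L^1))^r$, and then to check (A.3) by a direct integral estimate.

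For the membership in $W_0(L^1)$: each $\varphi_i$ is continuous, so the essential suprema in the amalgam norm are ordinary suprema, and for $k \in \mathbb{Z}^n$ with $\|k\|_\infty \ge 1$ the pointwise bound gives $\sup_{x \in [0,1]^n} |\varphi_i(x+k)| \le C \|k\|_\infty^{-n-\epsilon}$, since $\|x+k\|_\infty \ge \|k\|_\infty - 1$ on $[0,1]^n$, while the $k=0$ term is at most $C$. Grouping the lattice points into the shells $\{\|k\|_\infty = m\}$, each containing fewer than $2n(2m+1)^{n-1}$ points, one gets
\[
\|\varphi_i\|_{W(L^1)} \le C + C \sum_{m \ge 1} 2n(2m+1)^{n-1}\, m^{-n-\epsilon} < \infty
\]
because $\epsilon > 0$ makes the tail comparable to $\sum_m m^{-1-\epsilon}$; equivalently, one compares with the convergent integral $\int_{\mathbb{R}^n}(1+\|x\|_\infty)^{-n-\epsilon}\,dx$. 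Hence $\Phi \in (W_0(L^1))^r$, and the Proposition yields (A.1) and (A.2), with the explicit value $C_1 = \bigl(C_0 \sum_{i=1}^r \|\varphi_i\|_1^2\bigr)^{1/2}$ and $\|\varphi_i\|_1 \le C \int_{\mathbb{R}^n}(1+\|x\|_\infty)^{-n-\epsilon}\,dx$.

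For (A.3) I would estimate directly. Since $C_R = \{\|x\|_\infty \le R/2\}$,
\[
\|\varphi_i \cdot (1-\chi_{C_R})\|_2^2 = \int_{\|x\|_\infty > R/2} |\varphi_i(x)|^2\,dx \le C^2 \int_{\|x\|_\infty > R/2}(1+\|x\|_\infty)^{-2n}\,dx~,
\]
the last step discarding the factor $(1+\|x\|_\infty)^{-2\epsilon} \le 1$. Writing the integral in the radial variable $t = \|x\|_\infty$, whose level sets carry $(n-1)$-dimensional measure $n 2^n t^{n-1}$, the right-hand side is at most $C^2 n 2^n \int_{R/2}^\infty t^{n-1}(1+t)^{-2n}\,dt \le C^2 n 2^n \int_{R/2}^\infty t^{-n-1}\,dt = 2^{2n} C^2 R^{-n}$. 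So (A.3) holds with $\alpha = n$ and $C_3 = 2^{2n} C^2$.

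I do not anticipate any genuine difficulty: the whole content is the two elementary estimates above. The only points deserving attention are to keep all constants explicit, as the introduction promises, and to note that although the hypothesis in fact yields the faster rate $R^{-n-2\epsilon}$ in (A.3), there is no reason to record a value of $\alpha$ exceeding $n$, since the sampling rate of Theorem \ref{thm:main} depends on $\alpha$ only through $\alpha' = \min(n,\alpha)$.
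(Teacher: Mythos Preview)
Your proposal is correct and follows exactly the approach of the paper: show $\Phi \in (W_0(L^1))^r$ from the decay estimate, invoke the preceding Proposition for (A.1) and (A.2), and verify (A.3) by direct integration. The paper's own proof is a terse three sentences that simply asserts these steps; you have supplied the details (the shell-counting estimate for the amalgam norm and the explicit constant $C_3 = 2^{2n}C^2$ for (A.3)), and your closing remark about why $\alpha = n$ suffices despite the available $R^{-n-2\epsilon}$ decay is a useful observation that the paper does not make.
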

\begin{proof}
The decay estimate implies $\Phi \in (W_0(L^1))^r$, and thus (A.1)
and (A.2) follow from the previous proposition. (A.3), with $\alpha
= n$, is easily verified.
\end{proof}

\section{The localization operator associated to $V(\Phi)$ and $C_R$}

Throughout the remainder of the paper, we will always assume that $\Phi$ is a set of frame generators fulfilling assumptions (A.0) through (A.3).
The proof strategy for our main result is an adaptation of the method devised by Bass and Gr\"ochenig \cite{BaGro_12} for the special case of bandlimited functions. We introduce the localization operator
\[
 A_ R = P_\Phi \circ Q_R \circ P_\Phi~.
\] We will show that $A_R$ is a selfadjoint Hilbert-Schmidt operator, and therefore has a basis of eigenvectors with associated square-summable spectrum. Denoting by $\mathcal{P}_N$ the projection onto the span of the eigenvectors associated to the largest $N$ eigenvalues, we then establish a random sampling theorem for this space. We then show how sampling of the elements in $V_{R,\delta}(\Phi)$ can be related to sampling in $\mathcal{P}_N$. The proper choice of $N$, which will allow to transfer the random sampling result to $V_{R,\delta}(\Phi)$, depends on certain estimates concerning the decay of the spectrum of $A_R$.

Note that the following result is valid for all shift-generated frames of closed subspaces of ${\rm L}^2(\mathbb{R}^n)$, without any further assumptions on the generators.
\begin{lemma} \label{lem:AR_HS}
 $A_R$ is a positive-semidefinite Hilbert-Schmidt operator.
\end{lemma}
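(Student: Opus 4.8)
The plan is to show that $A_R = P_\Phi Q_R P_\Phi$ is positive-semidefinite and Hilbert--Schmidt. Positive-semidefiniteness is immediate once we observe that $Q_R$ is an orthogonal projection (hence positive) and $P_\Phi$ is selfadjoint (being an orthogonal projection onto $V(\Phi)$): for any $f$,
\[
 \langle A_R f, f \rangle = \langle Q_R P_\Phi f, P_\Phi f \rangle = \| \chi_{C_R} \cdot (P_\Phi f) \|_2^2 \ge 0~,
\]
and selfadjointness of $A_R$ follows the same way from $A_R^* = P_\Phi^* Q_R^* P_\Phi^* = A_R$. So the real content is the Hilbert--Schmidt property.

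For the Hilbert--Schmidt estimate, the key is to exploit the series expansion of $P_\Phi$ in (\ref{eqn:p_phi}) and factor $A_R$ through a Hilbert--Schmidt operator. The natural route is: first write $A_R = (Q_R P_\Phi)^* (Q_R P_\Phi)$, so that it suffices to show $Q_R P_\Phi$ is Hilbert--Schmidt, and then compute the Hilbert--Schmidt norm of $Q_R P_\Phi$ directly from the frame expansion. Concretely, plugging (\ref{eqn:p_phi}) in gives
\[
 Q_R P_\Phi = \sum_{m,i} (Q_R T_m \varphi_i) \otimes (T_m \widetilde{\varphi}_i)~,
\]
and since $(T_m \widetilde{\varphi}_i)_{m,i}$ is a Bessel system (constant $\widetilde{C}_0$ from (A.0)), a standard argument bounds the Hilbert--Schmidt norm of such a "synthesis composed with frame analysis'' operator:
\[
 \| Q_R P_\Phi \|_{HS}^2 \le \widetilde{C}_0 \sum_{m,i} \| Q_R T_m \varphi_i \|_2^2
 = \widetilde{C}_0 \sum_{m,i} \int_{C_R} |\varphi_i(x-m)|^2 \, dx~.
\]
The last double sum is finite: summing over $m \in \mathbb{Z}^n$ first gives $\sum_m \int_{C_R} |\varphi_i(x-m)|^2 dx = \sum_m \int_{C_R - m} |\varphi_i(x)|^2 dx$, and since each point of $\mathbb{R}^n$ lies in only finitely many (at most $O(R^n)$) of the translated cubes $C_R - m$, this is bounded by $C R^n \| \varphi_i \|_2^2 < \infty$. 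Summing the finitely many $i = 1, \ldots, r$ then gives a finite bound, so $Q_R P_\Phi$ is Hilbert--Schmidt and hence so is $A_R$.

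The step I expect to require the most care is the justification of the inequality $\| Q_R P_\Phi \|_{HS}^2 \le \widetilde{C}_0 \sum_{m,i} \| Q_R T_m \varphi_i \|_2^2$: one must argue that applying $Q_R P_\Phi$ to an orthonormal basis $(e_j)$ and summing $\sum_j \| Q_R P_\Phi e_j \|_2^2$, interchanging the sums over $j$ and over $(m,i)$ is legitimate (by positivity of all terms this is Tonelli, so it is safe), and that the resulting expression $\sum_{m,i} \sum_j |\langle e_j, T_m \widetilde{\varphi}_i \rangle|^2 \cdots$ is controlled using the Bessel bound after a Cauchy--Schwarz / cross-term estimate. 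In fact the cleanest version avoids cross terms entirely: writing $B$ for the analysis operator $f \mapsto (\langle f, T_m \widetilde\varphi_i\rangle)_{m,i}$ and $S$ for synthesis against $(Q_R T_m\varphi_i)_{m,i}$, one has $Q_R P_\Phi = S \circ B$ with $\|B\| \le \widetilde C_0^{1/2}$ bounded and $S$ Hilbert--Schmidt with $\|S\|_{HS}^2 \le \sum_{m,i}\|Q_R T_m\varphi_i\|_2^2$ (since the canonical basis vectors of $\ell^2$ map to the $Q_R T_m\varphi_i$), whence $\|Q_R P_\Phi\|_{HS} \le \|B\|\,\|S\|_{HS}$ by the ideal property of Hilbert--Schmidt operators. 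This makes the whole argument short and removes any delicate interchange-of-summation issues.
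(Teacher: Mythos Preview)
Your proof is correct and follows the same overall strategy as the paper: establish positive-semidefiniteness from $\langle A_R f, f\rangle = \|Q_R P_\Phi f\|_2^2$, then reduce the Hilbert--Schmidt claim to showing $Q_R P_\Phi$ is Hilbert--Schmidt, via the frame expansion (\ref{eqn:p_phi}) and the finiteness of $\sum_{m,i}\|Q_R T_m\varphi_i\|_2^2$. The execution differs in two places, and in both yours is slightly cleaner. First, for the summability of $\sum_{m,i}\|Q_R T_m\varphi_i\|_2^2$, the paper decomposes $\mathbb{Z}^n$ into $k^n$ cosets of $k\mathbb{Z}^n$ (with $k>R$) so that within each coset the translated cubes $C_R+m$ are disjoint; your bounded-overlap count (each point lies in at most $(\lfloor R\rfloor+1)^n$ of the cubes $C_R-m$) achieves the same bound more directly. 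Second, and more notably, for the concluding step the paper expands $(Q_R P_\Phi)(Q_R P_\Phi)^*$ as a sum of rank-one operators $(Q_R T_m\widetilde{\varphi}_j)\otimes(Q_R T_m\varphi_j)$ and shows this converges in trace-class norm using Cauchy--Schwarz and the \emph{additional} fact that $\sum_{m,j}\|Q_R T_m\widetilde{\varphi}_j\|_2^2<\infty$; your factorization $Q_R P_\Phi = S\circ B$ with $B$ bounded (Bessel constant $\widetilde{C}_0$) and $S$ Hilbert--Schmidt (synthesis from $\ell^2$) appeals only to the ideal property $\|SB\|_{HS}\le \|S\|_{HS}\|B\|$ and avoids both the trace-class argument and the second summability fact. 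The trade-off is minimal: the paper's argument is self-contained at the level of rank-one operators, while yours invokes a standard operator-ideal inequality but is shorter and uses fewer ingredients.
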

\begin{proof}
Positive semidefiniteness of $A_R$ follows from
\[ \langle A_R f, f \rangle = \langle Q_R P_\Phi f, P_\Phi f \rangle = \| Q_R P_\Phi f \|_2^2 ~,\]
using that $Q_R$ is a selfadjoint projection.

We next show that $Q_R P_\Phi$ is Hilbert-Schmidt, which will imply that $A_R$ is Hilbert-Schmidt, as a composition of a bounded and a Hilbert-Schmidt operator.

For $(j,m) \in \{ 1, \ldots,k \} \times \mathbb{Z}^n$, we have
\[
 Q_R \circ (T_m \varphi_j) \otimes (T_m \widetilde{\varphi}_j)   = ( Q_R T_m \varphi_j)  \otimes (T_m \widetilde{\varphi}_j) ~.
\]
 Using (\ref{eqn:p_phi}) and boundedness of $Q_R$, we can write
 \[
  Q_R P_\Phi = \sum_{j, m} (Q_R T_m  \varphi_j) \otimes (T_m \widetilde{\varphi}_j)
 \] with unconditional convergence in the strong operator topology.

 We next prove that
 \begin{equation} \label{eqn:normsum}
  \sum_{j,m} \| Q_R T_m \varphi_j \|_2^2 < \infty~.
 \end{equation}
 For fixed $k \in \mathbb{Z}$ with $k >R$ and $m_1,m_2 \in k \mathbb{Z}^n$ with $m_1 \not= m_2$, the sets $C_R+m_1$ and $C_R+m_2$ are disjoint. It follows for arbitrary $x \in \mathbb{R}^d$
 \begin{eqnarray*}
  \sum_{m \in \mathbb{Z}^n} \| Q_R T_{k m+x} \varphi_j\|_2^2 & = & \sum_{m \in \mathbb{Z}} \int_{C_R} |\varphi_j(y-x-k m)|^2 dy \\
  & = & \sum_{m \in \mathbb{Z}^n} \int_{C_R+k m} |\varphi_j(y-x)|^2 dy \le \| \varphi_{j} \|_2^2~,
 \end{eqnarray*}
 by the above observed disjointness. Since $\mathbb{Z}^n$ can be covered by $k^n$ cosets of $k \mathbb{Z}^n$, we obtain
 \[
   \sum_{m,j} \| Q_R T_m \varphi_j \|_2^2 < \infty~.
 \]

Thus, with $\vartheta_{m,j} = Q_R T_m \varphi_j$, we have
\[
 Q_R P_\Phi = \sum_{m,j} \vartheta_{m,j} \otimes  (T_m\widetilde{\varphi}_j)~,~\sum_{m,j} \| \vartheta_{m,j}\|_2^2 < \infty~,
\]
and hence
\begin{equation} \label{eqn:Q_abs}
 (Q_R P_\Phi)(Q_R P_\Phi)^* = \sum_{m,j} \left( Q_R P_\Phi T_m \widetilde{\varphi}_j \right) \otimes \vartheta_{m,j} =  \sum_{m,j} \left( Q_R T_m \widetilde{\varphi}_j \right) \otimes \vartheta_{m,j}
\end{equation} with convergence in the strong operator topology. By the same argument as for equation (\ref{eqn:normsum}), we see that
\[
 \sum_{m,j} \| Q_R T_m \widetilde{\varphi}_j \|_2^2 < \infty~.
\] This observation, in combination with (\ref{eqn:normsum}) and the Cauchy-Schwarz inequality, yields that
\[
\sum_{m,j} {\rm trace}\left( \left| \left( Q_R T_m \widetilde{\varphi}_j \right) \otimes \vartheta_{m,j}\right| \right) = \sum_{m,j} \left\| Q_R T_m \widetilde{\varphi}_j \right\| \; \left\|\vartheta_{m,j} \right\|  < \infty\;.
\] Hence we see that the expansion (\ref{eqn:Q_abs}) in fact converges in the trace class norm as well, finally implying that $Q_R P_\Phi$ is Hilbert-Schmidt.
\end{proof}

It follows that $A_R$ has an ONB of eigenvectors. Denoting the nonzero eigenvalues by $(\lambda_n)_{n \in I}$ and the associated eigenfunctions by $(\psi_n)_{n \in I}$, with $I$ being either $\mathbb{N}$ or $\{1, \ldots, M\}$ for some integer $M$, $A_R$ is given as the sum
\[
 \sum_{n \in I} \lambda_n \psi_n \otimes \psi_n~.
\]
As the equation $\psi_n = \lambda_n^{-1} P_\Phi^* Q_R P_\Phi \psi_n$ shows, we have $\psi_n \in V(\Phi)$, and thus $P_\Phi(\psi_n) = \psi_n$.
 Since $A_R$ is Hilbert-Schmidt, we have $\sum_{n} |\lambda_n|^2 < \infty$, and since it is positive-semidefinite, we may assume $\lambda_1 \ge \lambda_2 \ge ... \ge 0$. Furthermore, since $A_R$ is a composition of projections, we have $\lambda_1 \le \| A_R \|_{op} \le 1$.  We let
 \[ \mathcal{P}_N = {\rm span}\{ \psi_n : n = 1, \ldots, N \} ~.\]

 For the space of bandlimited functions, the eigenfunctions are the well-known prolate spheroidal wave functions introduced by Slepian and Pollak \cite{SlPo_61}.  For the sampling results derived below, some information on the spectrum of $A_R$ is necessary. We let
 \[
  N(R) = \max \{ n \in \mathbb{N}~: ~\lambda_n \ge 1/2 \}~,
 \] and $N(R)=0$ whenever $\lambda_1 < 1/2$. Thus, whenever $N(R)>0$, then $\lambda_{N(R)} \ge 1/2 > \lambda_{N(R)+1}$.

 The following lemma provides an estimate for $N(R)$, derived from the decay assumptions on the generators.

\begin{lemma} \label{lem:decay_eigenvals}
Let $\beta$ be defined by (\ref{eqn:beta}).
Then for all $R>\max(1,\sqrt[\alpha]{2 C_3})$, the inequalities $0 < N(R) \le \beta^n R^{n^2/\alpha'}$ hold.
\end{lemma}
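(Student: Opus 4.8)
The plan is to bound $N(R)$ from above by controlling the trace of $A_R$, using the elementary fact that since $\lambda_n \ge 1/2$ for $n \le N(R)$, we have $N(R)/2 \le \sum_{n \le N(R)} \lambda_n \le \tr(A_R)$. So the first step is to compute or estimate $\tr(A_R)$. Using the expansion $P_\Phi = \sum_{m,i} (T_m\varphi_i)\otimes(T_m\widetilde\varphi_i)$ together with $A_R = P_\Phi Q_R P_\Phi$ and self-adjointness of $Q_R$, one gets $\tr(A_R) = \tr(Q_R P_\Phi Q_R)$ (or directly $\tr(Q_R P_\Phi)$ after a cyclic shift, which is legitimate once everything is trace class as established in Lemma~\ref{lem:AR_HS}). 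Then, pairing $Q_R P_\Phi = \sum_{m,i}(Q_R T_m\varphi_i)\otimes(T_m\widetilde\varphi_i)$ against an orthonormal basis and summing, $\tr(Q_R P_\Phi) = \sum_{m,i}\langle Q_R T_m\varphi_i, T_m\widetilde\varphi_i\rangle$, which I would bound in absolute value via Cauchy--Schwarz by $\big(\sum_{m,i}\|Q_R T_m\varphi_i\|_2^2\big)^{1/2}\big(\sum_{m,i}\|T_m\widetilde\varphi_i\|_2^2\big)^{1/2}$ — but the second factor diverges, so instead I would pair more carefully, writing $\tr(A_R)=\|Q_R P_\Phi\|_{HS}^2$ is wrong too; the cleanest route is $\tr(A_R)=\sum_n \lambda_n = \sum_n \|Q_R\psi_n\|_2^2$ and bound this by first localizing.

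Concretely, I would estimate $\tr(A_R)$ by splitting the integer shifts into those with $\|m\|_\infty$ small (say $\le$ roughly $R + \beta$-type radius) and those far away. For the near shifts, I would use the crude bound $\langle Q_R T_m\varphi_i, T_m\widetilde\varphi_i\rangle \le \|\varphi_i\|_2\|\widetilde\varphi_i\|_2$, giving a contribution of order $R^n$ times a constant (number of such shifts is $O(R^n)$). For the far shifts, I would use the decay assumption (A.3): if $\|m\|_\infty$ exceeds $R$ by a margin $\rho$, then $Q_R T_m\varphi_i$ is supported where $\varphi_i$ is evaluated outside a cube of side $\sim \rho$, so $\|Q_R T_m\varphi_i\|_2^2 \le C_3 \rho^{-\alpha}$, and summing the geometric-type tail $\sum_{\rho} \rho^{n-1}\cdot\rho^{-\alpha}$ converges (or grows polynomially) in a controlled way. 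Combining, $\tr(A_R) \le c\, R^n$ for an explicit constant, whence $N(R) \le 2\tr(A_R) \le 2c\,R^n$, and I would then verify that $2c\, R^n \le \beta^n R^{n^2/\alpha'}$ with the specific $\beta$ from \eqref{eqn:beta}, for $R > \max(1,\sqrt[\alpha]{2C_3})$ — the exponent $n^2/\alpha' \ge n$ gives room, and the threshold on $R$ ensures the tail terms are dominated. The lower bound $N(R)>0$ requires showing $\lambda_1 \ge 1/2$: pick any normalized $f\in V(\Phi)$ concentrated on $C_R$ (e.g. built from a single generator translate localized inside $C_R$, available once $R$ is large since $\|\varphi_i(1-\chi_{C_R})\|_2^2 \le C_3 R^{-\alpha} < \tfrac12\|\varphi_i\|_2^2$ by the choice $R>\sqrt[\alpha]{2C_3}$), so that $\langle A_R f, f\rangle = \|Q_R f\|_2^2 \ge 1/2$, forcing $\lambda_1 \ge 1/2$.

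The main obstacle I expect is getting the constant bookkeeping to land exactly on the stated form $\beta^n R^{n^2/\alpha'}$ with $\beta = 3 + 2\sqrt[\alpha]{2^{n+2} r C_0 \widetilde C_0^2 C_3}$: the combinatorial count of integer shifts intersecting an $R$-cube, the per-shift Bessel-type bounds (which is presumably where the factors $C_0$, $\widetilde C_0^2$, and $r$ enter — via $\|\widetilde\varphi_i\|_2^2 \le \widetilde C_0$-type estimates and the frame expansion), and the precise summation of the decay tail all have to be tracked so that after extracting the $n$-th root, the base is exactly $\beta$. The appearance of $\alpha' = \min(n,\alpha)$ rather than $\alpha$ in the exponent is the signal that one handles the case $\alpha \ge n$ and $\alpha < n$ slightly differently — when $\alpha \ge n$ the decay tail is summable and contributes only a bounded amount, absorbed into the constant, whereas when $\alpha < n$ the tail grows like $R^{n-\alpha}$ and one must carry it as an extra power, which (after the root) is what converts the exponent from $n$ to $n + (n-\alpha)\cdot(\text{something})$, ultimately $n^2/\alpha$. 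So I would organize the estimate to keep an explicit $R$-power $R^{n + \max(0, n-\alpha)\cdot\text{(factor)}}$ and check it is $\le R^{n^2/\alpha'}$. Everything else — Hilbert-Schmidt-ness, trace class convergence, the cyclic trace identity — is already in hand from Lemma~\ref{lem:AR_HS} and standard operator theory.
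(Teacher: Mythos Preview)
Your approach via the trace is genuinely different from the paper's, and the core idea is sound: $N(R)/2 \le \sum_{n\le N(R)}\lambda_n \le \tr(A_R)$ is correct. However, there are two issues.

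First, your implementation is more convoluted than necessary. The frame-expansion formula $\tr(A_R)=\sum_{m,i}\langle Q_R T_m\varphi_i, T_m\widetilde\varphi_i\rangle$ requires trace-norm (not just SOT) convergence of the rank-one expansion, which amounts to $\sum_{m,i}\|Q_R T_m\varphi_i\|_2<\infty$ --- an $\ell^1$ condition not established anywhere. The clean route is via the reproducing kernel: for any ONB $(\psi_n)$ of $V(\Phi)$,
\[
\tr(A_R)=\sum_n\|Q_R\psi_n\|_2^2=\int_{C_R}\sum_n|\langle v_x,\psi_n\rangle|^2\,dx=\int_{C_R}\|v_x\|_2^2\,dx\le C_1^2 R^n,
\]
using only (A.1). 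This yields $N(R)\le 2C_1^2 R^n$ with no tail analysis at all, so your anticipated ``tail grows like $R^{n-\alpha}$'' mechanism never materializes.

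Second, and more substantively: the bound you actually reach, $N(R)\le 2C_1^2 R^n$, does \emph{not} imply the stated inequality $N(R)\le \beta^n R^{n^2/\alpha'}$, because $\beta$ is built from $C_0,\widetilde C_0,C_3,r$ and has no a priori relation to $C_1$. When $\alpha'=n$ both sides scale as $R^n$ and you would need $2C_1^2\le\beta^n$, which is simply not guaranteed. So your method proves a \emph{different} (and in fact sharper in $R$) estimate, but not the lemma as written.

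For comparison, the paper's argument uses the minimax characterization: it exhibits, for suitable $S$, the subspace $\mathcal H_S=\mathrm{span}\{T_m\widetilde\varphi_i:\|m\|_\infty\le S/2\}$ and shows that $\langle A_R f,f\rangle<1/2$ for every unit $f\perp\mathcal H_S$, by bounding the Hilbert--Schmidt norm of the tail coefficient matrix. This HS-to-operator-norm passage is wasteful, forcing $S\sim R^{n/\alpha'}$ and hence the exponent $n^2/\alpha'$; the specific $\beta$ then drops out of that calculation. The trade-off: the paper's route lands exactly on the stated constants (used verbatim downstream), while your trace route gives a tighter $R$-dependence but with the constant $2C_1^2$. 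Your lower-bound argument for $N(R)>0$ via $\langle A_R\varphi_i,\varphi_i\rangle\ge 1-C_3R^{-\alpha}>1/2$ matches the paper's.
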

\begin{proof}
 We use the well-known minimax formula
 \[
  \lambda_m = \inf \{ \sup \{ \langle A_R f, f\rangle~:~ f \bot \mathcal{H}, \| f \|_2 = 1 \} : \mathcal{H} \subset {\rm L}^2(\mathbb{R}^n),  \dim(\mathcal{H}) \le m \}~.
 \]
Now fix $S>R$, and consider
\[
 \mathcal{H}_S = {\rm span} \{ T_m \widetilde{\varphi}_i : 1 \le i \le r~, \| m \|_\infty \le S/2 \}~.
\] It follows that ${\rm dim}(\mathcal{H}_S) = (2 \lfloor S/2 \rfloor +1)^n \le (\lfloor S \rfloor +1)^n$.
Next assume that $f \in \mathcal{H}_S^\bot$ is a unit vector.
Then we obtain
\begin{eqnarray*}
 \langle A_R f, f \rangle = \sum_{m,j,m',j'} c_{m,j} a_{m,j,m',j'} \overline{ c_{m',j'}}~,
\end{eqnarray*}
with $c_{m,i} = \langle f, T_m \widetilde{\varphi}_i \rangle$, and an infinite, positive semidefinite matrix
\[ \mathcal{A} = (a_{m,j,m',j'})_{((m,j),(m',j')) \in (\mathbb{Z}^n \times \{ 1, \ldots, r\})^2} \] defined by
\[
 a_{m,j,m',j'} = \left\{ \begin{array}{cc} \langle Q_R T_m \varphi_j, T_{m'} \varphi_{j'} \rangle  & \min(\| m \|_\infty,\|m' \|_\infty) > S/2 \\  0 & \mbox{otherwise} \end{array} \right.~,
\] recall the assumption $f \bot T_m \widetilde{\varphi}_i$, for $\| m \|_\infty \le S/2$.
In particular, we get
\[
 \langle A_R f, f \rangle \le \| c \|_2^2 \| \mathcal{A} \|_{\rm op} \le \widetilde{C}_0 \| \mathcal{A} \|_{\rm op}
\]
We estimate the Hilbert-Schmidt norm of the matrix, as follows:
\begin{eqnarray*}
\| \mathcal{A} \|_{HS}^2 & = &
 \sum_{m,j,m',j',\|m\|_\infty \ge S, \|m' \|_\infty > S/2} |a_{m,j,m',j'}|^2 \\
 & \le & \sum_{m,j,\|m \|_\infty > S/2} \sum_{m',j'} \left| \langle Q_R T_m \varphi_j, T_{m'} \varphi_{j'} \rangle \right|^2 \\
 & \le & \sum_{m,j, \|m \|_\infty > S/2 S} C_0 \| Q_R T_m \varphi_j \|_2^2 ~.
\end{eqnarray*}
We can now employ a similar reasoning as in the proof of Lemma \ref{lem:AR_HS}: Picking $k = \lfloor R \rfloor +1$, we have for arbitrary distinct $\ell,\ell' \in \{ 0, \ldots, k-1 \}^n$ that $\ell + C_R \cap \ell'+C_R$ has measure zero. Furthermore, $m + C_R \cap C_{S/2-R}$ has measure zero, whenever $\| m \|_\infty> S/2$. This implies that
\begin{eqnarray*}
 \sum_{m \in \mathbb{Z}^n, \|m \|_\infty>S/2} \| Q_R T_m \varphi_j \|_2^2 & = & \sum_{\ell \in \{ 0, \ldots,k-1\}^n}
 \sum_{m \in \ell + k \mathbb{Z}^n,  \|m \|_\infty>S/2} \| Q_R T_m \varphi_j \|_2^2
 \\
 & = & \sum_{\ell \in \{ 0, \ldots,k-1 \}}^n
 \sum_{m \in \ell + k\mathbb{Z}^n,  \|m \|_\infty>S/2} \int_{C_R + m} |\varphi_j(x)|^2 dx \\
 & \le & \sum_{\ell \in  \{ 0, \ldots,k-1\}^n} \int_{\mathbb{R}^n \setminus C_{S/2-R}} |\varphi_j(x)|^2 dx \\
 & \le & C_3 \left( \lfloor R \rfloor +1\right)^n (S/2-R)^{-\alpha}~,
\end{eqnarray*}
using the decay assumption (A.3).
Thus we arrive at
\[
 \langle A_R f,f \rangle^2 \le \widetilde{C}_0^2 \| \mathcal{A} \|_{\rm op}^2 \le  \widetilde{C}_0^2 \| \mathcal{A} \|_{\rm HS}^2 \le r \widetilde{C}_0^2 C_0 C_3  \left( \lfloor R \rfloor +1\right)^n (S/2-R)^{-\alpha}
\]
For $S = (\beta-1) R^{n/\alpha'}$ with $\beta$ according to (\ref{eqn:beta}), one finds that $R>1$ and the definition of $\alpha'$ yield that $R \le R^{n/\alpha'}$, and hence
\begin{eqnarray*}
  r \widetilde{C}_0^2 C_0 C_3 (S/2 - R)^{-\alpha} (\lfloor R \rfloor +1)^n & \le &  r \widetilde{C}_0^2 C_0 C_3 \left(\frac{\beta-3}{2} \right)^{-\alpha} R^{-n} (\lfloor R \rfloor +1)^n \\
  & \le &  r \widetilde{C}_0^2 C_0 C_3 2^n \left( \frac{\beta-3}{2} \right)^{-\alpha} \\
  & \le & \frac{1}{4}~,
\end{eqnarray*} by definition of $\beta$. Hence we have shown
\[
 \langle A_R f,f \rangle^2 < 1/4~
\] for all unit vectors $f \in \mathcal{H}_S^\bot$.

If $R>1$, then $N = (\lfloor (\beta -1) R^{n/\alpha'} \rfloor + 1)^n \le \beta^n R^{n^2/\alpha'}$, and the minimax estimate yields $\lambda_N < 1/2$.
On the other hand, (A.3) implies
\[
 \lambda_1 \ge \langle A_R \varphi_i, \varphi_i \rangle \ge 1- C_3 R^{-\alpha} > 1/2
\] as soon as $R>\sqrt[\alpha]{2 C_3}$.
Hence we find for all $R> \max(1,\sqrt[\alpha]{2 C_3})$ that
\[ 0 < N(R) \le \beta^n R^{n^2/\alpha'} ~.\]
\end{proof}

 \begin{rem}
  The proof of Lemma \ref{lem:decay_eigenvals} is the only place in the paper where we employ
  the decay assumption (A.3) on the generators. All subsequent estimates of $N(R)$ in the
  following depend on this result. The case of bandlimited functions provides one example
  where similar or sharper estimates may be available by alternative methods;
  here the estimate $\lfloor R \rfloor -1 \le N(R) \le \lfloor R \rfloor +1$ can be
  shown by Fourier-analytic arguments, see \cite{LaPo_62} for the one-dimensional case.
   This is the main reason for the suboptimal sampling rate $O(R^n \log R^{n^2/\alpha'})$ stated
    in Remark \ref{rem:bandlimited} for the bandlimited case. Using the estimate from \cite{LaPo_62}
    instead of Lemma \ref{lem:decay_eigenvals}, our subsequent arguments provide
    a sampling rate of $O(R^n \log R^{n})$, just as in \cite{BaGro_12}.
 \end{rem}

\section{Random sampling in finite sums of eigenspaces}

\label{sect:rs_bandlimited}

We continue our adaptation of \cite{BaGro_12}. Recall from the previous
 section that $\lambda_1 \ge \lambda_2 \ge ... $ are the eigenvalues of $A_R$,
 with corresponding eigenfunctions $\psi_1,\psi_2,\ldots$. The 
 span of the first $N$ eigenfunctions is denoted by $\mathcal{P}_N$.
 We let $\Delta_N = {\rm diag}(\lambda_1,\ldots,\lambda_N)$.

The aim of this section is to prove a random sampling statement for
$\mathcal{P}_N$. It will follow by applying a matrix Bernstein
inequality (stated in the following Theorem \ref{bernstein}), which
uses the following notation:
 For $A \in
\mathbb{C}^{N \times N}$, we let $\| A \|$ denote the operator norm
with respect to the euclidean norm. Further, the inequality $A \le
B$ for two matrices $A,B$ of equal size means that $B-A$ is positive
semidefinite.

\begin{thm}{\cite{Ma_2014}}\label{bernstein}
Let $X_j$ be a sequence of independent, random self-adjoint $N\times
N$-matrices. Suppose that $\mathbb{E}X_j =0$ and $\| X_j\|\leq B$
a.s. And let $\sigma^2=\| \sum\limits_{i=1}^{s}
\mathbb{E}(X_j^2)\|$. Then for all $t>0$,
\[\mathbb{P}\big(\lambda_{max}\big( \sum\limits_{i=1}^{s}X_j\big)\geq t\big)\leq N exp\big(-\frac{t^2/2}{\sigma^2 + Bt/3}\big)\]
where $\lambda_{max}(U)$ is the largest singular value of a matrix
$U$ so that $\|U\|=\lambda_{max}(A^*A)^{1/2}$ is the operator norm.
\end{thm}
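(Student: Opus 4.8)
\emph{Proof proposal.} This is the non-commutative Bernstein inequality, which I would establish by the matrix Laplace transform method of Ahlswede--Winter and Tropp. The first step is to reduce the tail estimate to a bound on the matrix moment generating function. Fixing a parameter $\theta>0$ and using that the eigenvalues of $e^{\theta Y}$ are positive for self-adjoint $Y$, so that $e^{\theta\lambda_{\max}(Y)}\le\tr\,e^{\theta Y}$, Markov's inequality applied to $y\mapsto e^{\theta y}$ yields
\[
\mathbb{P}\left(\lambda_{\max}\Big(\sum_{j=1}^{s}X_j\Big)\ge t\right)\le e^{-\theta t}\,\mathbb{E}\,\tr\exp\Big(\theta\sum_{j=1}^{s}X_j\Big)\;.
\]
Everything then comes down to an upper bound for $\mathbb{E}\,\tr\exp(\theta\sum_j X_j)$ followed by a final optimization over $\theta$.

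The main obstacle is the second step, the subadditivity of the matrix cumulant generating function, which is the only place where a genuinely non-commutative tool is required. I would condition successively on $X_1,\dots,X_{s-1}$ and invoke Lieb's concavity theorem --- for each fixed self-adjoint $H$, the map $A\mapsto\tr\exp(H+\log A)$ is concave on positive-definite matrices --- together with Jensen's inequality and the independence of the $X_j$, peeling off the factors one at a time to obtain
\[
\mathbb{E}\,\tr\exp\Big(\theta\sum_{j=1}^{s}X_j\Big)\le\tr\exp\Big(\sum_{j=1}^{s}\log\mathbb{E}\,e^{\theta X_j}\Big)\;.
\]
One could substitute the Golden--Thompson inequality for Lieb's theorem here, but that cruder route delivers only the variance proxy $\sum_j\|\mathbb{E}X_j^2\|$ in place of the sharper $\sigma^2=\|\sum_j\mathbb{E}X_j^2\|$ claimed in the statement, so to land the stated constant one really wants the Lieb-based peeling.

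Third, I would bound each local factor using $\mathbb{E}X_j=0$ and $\|X_j\|\le B$. The scalar input is that for $|x|\le B$ and $0<\theta<3/B$ one has $e^{\theta x}\le 1+\theta x+g(\theta)x^2$ with $g(\theta)=(\theta^2/2)/(1-B\theta/3)$; this follows from the fact that $u\mapsto(e^u-1-u)/u^2$ is increasing, so that the maximum of $(e^{\theta x}-1-\theta x)/x^2$ over $|x|\le B$ is attained at $x=B$, combined with $k!\ge 2\cdot 3^{k-2}$ to sum the Taylor series of $e^{\theta B}-1-\theta B$. Applying this eigenvalue-wise to $X_j$, taking expectations (the linear term drops out since $\mathbb{E}X_j=0$), and using $I+A\preceq e^A$ gives $\mathbb{E}\,e^{\theta X_j}\preceq\exp(g(\theta)\,\mathbb{E}X_j^2)$. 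Then operator monotonicity of $\log$, monotonicity of $\tr\exp$ for the semidefinite order, the elementary bound $\tr\exp(C)\le N e^{\lambda_{\max}(C)}$, and the identity $\lambda_{\max}(\sum_j\mathbb{E}X_j^2)=\sigma^2$ (the sum being positive semidefinite) combine to yield
\[
\tr\exp\Big(\sum_{j=1}^{s}\log\mathbb{E}\,e^{\theta X_j}\Big)\le\tr\exp\Big(g(\theta)\sum_{j=1}^{s}\mathbb{E}X_j^2\Big)\le N\exp(g(\theta)\,\sigma^2)\;.
\]

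Finally, combining the three steps gives $\mathbb{P}(\lambda_{\max}(\sum_j X_j)\ge t)\le N\exp(-\theta t+g(\theta)\sigma^2)$ for every $0<\theta<3/B$. The choice $\theta=t/(\sigma^2+Bt/3)$ lies in the admissible range, and a short computation shows $-\theta t+g(\theta)\sigma^2=-(t^2/2)/(\sigma^2+Bt/3)$, which is exactly the exponent claimed; the reference cited above carries out these steps in detail.
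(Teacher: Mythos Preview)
The paper does not prove this theorem at all; it is quoted verbatim from the cited reference \cite{Ma_2014} and used as a black box in Section~4. So there is no ``paper's own proof'' to compare against.

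Your argument is a correct and complete outline of the standard matrix Laplace transform proof of the matrix Bernstein inequality (Tropp's refinement of Ahlswede--Winter via Lieb's concavity). The final optimization step checks: with $\theta=t/(\sigma^2+Bt/3)$ one has $1-B\theta/3=\sigma^2/(\sigma^2+Bt/3)$, hence $g(\theta)\sigma^2=(t^2/2)/(\sigma^2+Bt/3)$ and $-\theta t+g(\theta)\sigma^2=-(t^2/2)/(\sigma^2+Bt/3)$ as claimed. It is worth noting, though, that the specific reference \cite{Ma_2014} (Mackey--Jordan--Chen--Farrell--Tropp) actually derives matrix Bernstein by a \emph{different} route, namely the method of exchangeable pairs and a matrix Stein-type coupling, rather than the Lieb-based peeling you sketch; the Lieb/Laplace approach you give is the one in Tropp's earlier user-friendly survey. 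Both routes land the same inequality with the same constants, but your proposal is not the proof in the cited source.
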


The random matrices under consideration are constructed as follows:
For each $j \in \mathbb{N}$ and $\ell \in \{ 1, \ldots, k \}$, we
introduce the $N \times N$ rank-one random matrix $T^\ell_j$ defined
by
 \begin{equation} \label{eqn:T_j}
  (T_j)_{k,l} = \psi_k(x_j) \overline{\psi_l(x_j)}~.
 \end{equation} Here the $x_j$ denote i.i.d. random variables, uniformly distributed on $C_R$.
We finally let
\begin{equation} \label{eqn:X_j}
X_j = T_j - \mathbb{E}(T_j)\;.
\end{equation}

The following provides useful estimates for the constants in the
matrix Bernstein inequality. It is an analog of \cite[Lemma
4]{BaGro_12}.
\begin{lemma} \label{lem:exp_T_j}
 Let $X_j$ be defined via (\ref{eqn:T_j}) and (\ref{eqn:X_j}). Then the following hold:
\begin{eqnarray*}
 \mathbb{E}(X_j) = 0 & , & \| X_j \| \le \max(C_1^2, R^{-n}) ~~ (a.s.), \\
 \mathbb{E}(X_j^2) \le R^{-n} C_1^2 \Delta_N & , & \sigma^2 = \left\| \sum_{j=1}^s \mathbb{E}(X_j)^2 \right\| \le s C_1^2 R^{-n} ~.
\end{eqnarray*}
\end{lemma}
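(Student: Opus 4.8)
The plan is to verify the four claims in order, treating them as essentially explicit computations with the random rank-one matrices $T_j$, exploiting the fact that the $\psi_k$ are orthonormal eigenvectors of $A_R$ inside $V(\Phi)$. First, $\mathbb{E}(X_j) = 0$ is immediate from the definition $X_j = T_j - \mathbb{E}(T_j)$. For the almost-sure operator norm bound on $X_j$: since $T_j$ is rank one with $(T_j)_{k,l} = \psi_k(x_j)\overline{\psi_l(x_j)}$, we have $T_j = u \otimes u$ where $u = (\psi_1(x_j),\ldots,\psi_N(x_j))^T \in \mathbb{C}^N$, so $\|T_j\| = \|u\|_2^2 = \sum_{k=1}^N |\psi_k(x_j)|^2$. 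I would bound this pointwise: for fixed $x \in C_R$, the vector $(\psi_k(x))_k$ equals $(\langle \psi_k, v_x\rangle)_k$ since each $\psi_k \in V(\Phi)$, and because $(\psi_k)$ is orthonormal, $\sum_k |\psi_k(x)|^2 = \|P_{\mathcal{P}_N} v_x\|_2^2 \le \|v_x\|_2^2 \le C_1^2$ by (A.1). Hence $\|T_j\| \le C_1^2$ a.s. Next I need $\|\mathbb{E}(T_j)\|$: the entries are $\mathbb{E}(T_j)_{k,l} = R^{-n}\int_{C_R} \psi_k(x)\overline{\psi_l(x)}\,dx = R^{-n}\langle Q_R \psi_k, \psi_l\rangle = R^{-n}\langle A_R \psi_k, \psi_l\rangle = R^{-n}\lambda_k \delta_{k,l}$, using $P_\Phi \psi_k = \psi_k$ and $A_R \psi_k = \lambda_k \psi_k$. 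So $\mathbb{E}(T_j) = R^{-n}\Delta_N$, which has operator norm $R^{-n}\lambda_1 \le R^{-n}$. Then $\|X_j\| \le \max(\|T_j\|, \|\mathbb{E}(T_j)\|) \le \max(C_1^2, R^{-n})$, using that for a difference of positive semidefinite matrices the norm is bounded by the max of the two norms.

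For the bound $\mathbb{E}(X_j^2) \le R^{-n}C_1^2 \Delta_N$: expand $X_j^2 = T_j^2 - T_j\mathbb{E}(T_j) - \mathbb{E}(T_j)T_j + \mathbb{E}(T_j)^2$ and take expectations, getting $\mathbb{E}(X_j^2) = \mathbb{E}(T_j^2) - \mathbb{E}(T_j)^2$. Since $T_j^2 = (\sum_k |\psi_k(x_j)|^2) T_j \le C_1^2 T_j$ as positive semidefinite matrices (using the pointwise bound above), we get $\mathbb{E}(T_j^2) \le C_1^2 \mathbb{E}(T_j) = C_1^2 R^{-n}\Delta_N$. Dropping the subtracted positive-semidefinite term $\mathbb{E}(T_j)^2 \ge 0$ only helps the upper bound, so $\mathbb{E}(X_j^2) \le C_1^2 R^{-n}\Delta_N$. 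Finally, since the $X_j$ are i.i.d., $\sum_{j=1}^s \mathbb{E}(X_j^2) = s\,\mathbb{E}(X_1^2) \le s C_1^2 R^{-n}\Delta_N$, and taking operator norms together with $\|\Delta_N\| = \lambda_1 \le 1$ gives $\sigma^2 \le s C_1^2 R^{-n}$.

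The only genuinely delicate point is the interchange of the pointwise bound $\sum_k |\psi_k(x)|^2 \le C_1^2$ with the operator inequality $T_j^2 \le C_1^2 T_j$; this is valid because $T_j$ is a fixed (once $x_j$ is realized) positive-semidefinite rank-one matrix and the scalar $\sum_k |\psi_k(x_j)|^2$ is exactly its trace/norm, so $T_j^2 = \|T_j\| T_j$ holds exactly for rank-one matrices — no real obstacle, just care with what is random and what is fixed. Everything else is bookkeeping with the eigenrelation $A_R \psi_k = \lambda_k \psi_k$ and the reproducing property from (A.1); I expect the write-up to be short.
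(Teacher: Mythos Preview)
Your proposal is correct and follows essentially the same route as the paper's proof: both identify $\mathbb{E}(T_j)=R^{-n}\Delta_N$, bound $\|T_j\|$ via the reproducing-kernel estimate $\sum_k |\psi_k(x_j)|^2 \le \|v_{x_j}\|_2^2 \le C_1^2$, use the rank-one identity $T_j^2 = \big(\sum_k |\psi_k(x_j)|^2\big)T_j$ to get $\mathbb{E}(T_j^2)\le C_1^2 R^{-n}\Delta_N$, and then drop the nonnegative term $\mathbb{E}(T_j)^2$. The only cosmetic differences are that the paper phrases the bound on $\|T_j\|$ as $\sup_{f\in\mathcal{P}_N,\|f\|_2=1}|f(x_j)|^2\le C_1^2$ and defers the explicit computation of $\mathbb{E}(T_j)$ to the proof of the next theorem.
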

\begin{proof}
It is obvious that $\mathbb{E}(X_j) = 0$. Since both $T_j$ and
$\mathbb{E}(T_j)$ are positive semidefinite, we have
\[
 \| X_j \| \le \max( \| T_j \|, \| \mathbb{E} T_j \| ) \le \max (\| T_j \|, R^{-n})~.
\] Furthermore, recall from (\ref{eqn:fxj}) that
\[
 \| T_j \| = \sup_{\| c \|_2 = 1} |\langle c, T_j c \rangle|= \sup_{f \in \mathcal{P}_N, \| f \|_2 = 1} |f(x_j)|^2 \le C_1^2
\]using assumption (A.1) and $\mathcal{P}_N \subset V(\Phi)$.

We next compute
\[
 \mathbb{E}(X_j^2) = \mathbb{E}(T_j^2) - \left( \mathbb{E}(T_j) \right)^2 = \mathbb{E}(T_j^2) - R^{-2n} \Delta_N^2.
\] The square of the rank-one matrix $T_j$ is computed as
\[
 T_j^2 = \left( \sum_{\ell=1}^N |\psi_\ell(x_j)|^2 \right) T_j ~.
\] Using the reproducing kernel $(v_x)_{x \in \mathbb{R}^n}$ for $V(\Phi)$, together with the fact that the eigenfunctions are an orthonormal system, we can estimate
\[
 \sum_{\ell=1}^N |\psi_\ell(x_j)|^2 = \sum_{\ell=1}^N |\langle \psi_\ell, v_{x_j} \rangle|^2 \le
 \sum_{\ell=1}^\infty |\langle \psi_\ell, v_{x_j} \rangle|^2 \le \| v_{x_j} \|_2^2 \le C_1^2 ~.
\] Thus
\[
 T_j^2 \le C_1^2 T_j~,
\] and since the expected value of a positive-semidefinite matrix valued random variable is positive semidefinite, we obtain
\[
 \mathbb{E}(T_j^2) \le C_1^2 \mathbb{E}(T_j)
\]
and thus
\[
 \mathbb{E}(X_j^2) \le C_1^2 R^{-n} \Delta_N - R^{-2n} \Delta_N^2 ~.
\]

Since the $X_j$ are selfadjoint, $X_j^2$ is positive semidefinite,
thus we have in fact proved
\[
 0 \le  \mathbb{E}(X_j^2) \le C_1^2 R^{-n} \Delta_N
\]
For positive semidefinite matrices, $A \le B$ implies $\| A \| \le
\| B \|$, hence
\[
 \sigma^2 = \left\| \sum_{j=1}^s \mathbb{E}(X_j^2) \right\| \le \left\| \sum_{j=1}^s  C_1 ^2 R^{-n} \Delta_N \right\|
 \le s C_1 ^2 R^{-n}
\]
\end{proof}

We can now formulate and prove a random sampling statement for $\mathcal{P}_N$.

\begin{thm} \label{thm:samp_PN}
 Let $(x_j)_{j \in \mathbb{N}}$ denote a sequence of independent and identically distributed random variables, uniformly distributed in $C_R$. Assume that $R \ge \sqrt[n]{C_1^2}$. Then, for all $\nu \ge 0$ and $s \in \mathbb{N}$:
 \[
  \mathbb{P} \left( \inf_{f \in \mathcal{P}_N, \| f \|_2 = 1} \frac{1}{s} \sum_{j=1}^s \left( |f(x_j)|^2- R^{-n} \| Q_R f \|_2^2 \right) \le R^{-n} \nu \right) \le N \exp \left( -\frac{\nu^2 s}{C_1^2 R^n (1+\nu/3)} \right)~.
 \]
\end{thm}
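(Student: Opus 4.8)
The plan is to recognize the infimum over $f$ as the smallest eigenvalue of the random matrix $\tfrac1s\sum_{j=1}^sX_j$ and then to apply the matrix Bernstein inequality (Theorem~\ref{bernstein}) with the moment bounds from Lemma~\ref{lem:exp_T_j}. (For the forward application to the lower sampling bound, the event that matters is that this infimum drops \emph{below} $-R^{-n}\nu$, and it is in that form that I would prove the estimate.) First I would rewrite the empirical quantity as a quadratic form. Since $(\psi_k)_{k\le N}$ is an orthonormal basis of $\mathcal{P}_N$, the map $c=(c_1,\dots,c_N)\mapsto f=\sum_{k=1}^N\overline{c_k}\psi_k$ is a bijection between the unit spheres of $\mathbb{C}^N$ and of $\mathcal{P}_N$; writing $u_j=(\psi_1(x_j),\dots,\psi_N(x_j))^T$ one has $T_j=u_ju_j^*$, hence $|f(x_j)|^2=\langle c,T_jc\rangle$, which is (\ref{eqn:fxj}). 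For the centering term I would use that each $\psi_k$ lies in $V(\Phi)$ and satisfies $A_R\psi_k=\lambda_k\psi_k$, so that
\[
 \langle Q_R\psi_k,\psi_l\rangle=\langle P_\Phi Q_RP_\Phi\psi_k,\psi_l\rangle=\langle A_R\psi_k,\psi_l\rangle=\lambda_k\,\delta_{k,l}\;;
\]
consequently $\mathbb{E}(T_j)=R^{-n}\Delta_N$ and $R^{-n}\|Q_Rf\|_2^2=\langle c,\mathbb{E}(T_j)c\rangle$. Combining these,
\[
 \frac1s\sum_{j=1}^s\Bigl(|f(x_j)|^2-R^{-n}\|Q_Rf\|_2^2\Bigr)=\Bigl\langle c,\Bigl(\tfrac1s\sum_{j=1}^sX_j\Bigr)c\Bigr\rangle\;,
\]
so the infimum over unit vectors equals $\lambda_{\min}\bigl(\tfrac1s\sum_{j=1}^sX_j\bigr)$.

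Next I would pass to Bernstein. The event to estimate is $\bigl\{\lambda_{\min}(\tfrac1s\sum_jX_j)\le-R^{-n}\nu\bigr\}=\bigl\{\lambda_{\max}(\sum_j(-X_j))\ge sR^{-n}\nu\bigr\}$. The matrices $-X_j$ are independent, self-adjoint and mean zero, and by Lemma~\ref{lem:exp_T_j} they satisfy $\|-X_j\|\le\max(C_1^2,R^{-n})$ a.s.\ and $\sigma^2=\bigl\|\sum_{j=1}^s\mathbb{E}((-X_j)^2)\bigr\|\le sC_1^2R^{-n}$. Theorem~\ref{bernstein} with $t=sR^{-n}\nu$ then bounds the probability by $N\exp\!\bigl(-\tfrac{t^2/2}{\sigma^2+\max(C_1^2,R^{-n})\,t/3}\bigr)$. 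Invoking $R\ge\sqrt[n]{C_1^2}$ to replace $\max(C_1^2,R^{-n})$ by $C_1^2$, substituting $t=sR^{-n}\nu$ and the bound on $\sigma^2$, and cancelling a common factor $sR^{-n}$ from numerator and denominator, the exponent collapses to a constant multiple of $\tfrac{\nu^2s}{C_1^2R^n(1+\nu/3)}$, yielding the asserted bound.

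I do not expect a genuine obstacle here: the analytic input---existence of the eigenbasis $(\psi_k)$ (Lemma~\ref{lem:AR_HS}), and the constants $B$ and $\sigma^2$ (Lemma~\ref{lem:exp_T_j})---has already been supplied, and the probabilistic input is exactly Theorem~\ref{bernstein}. The only step that must be done with care is the identity $\mathbb{E}(T_j)=R^{-n}\Delta_N$ in the first paragraph: it is precisely the eigen-relation $A_R\psi_k=\lambda_k\psi_k$ that makes the prescribed centering $R^{-n}\|Q_Rf\|_2^2$ agree with $\mathbb{E}|f(x_j)|^2$, so that the $X_j$ are genuinely centered and the hypotheses of Bernstein are satisfied; the rest is the elementary arithmetic of the second paragraph.
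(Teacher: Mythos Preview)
Your proposal is correct and follows essentially the same route as the paper: compute $\mathbb{E}(T_j)=R^{-n}\Delta_N$ from the eigen-relation for $A_R$, identify the infimum as $\lambda_{\min}\bigl(\tfrac1s\sum_jX_j\bigr)$, and apply the matrix Bernstein inequality with the constants supplied by Lemma~\ref{lem:exp_T_j}. Your remark that the relevant event should read $\le -R^{-n}\nu$ rather than $\le R^{-n}\nu$ is well taken; the paper uses it in that corrected form in Lemma~\ref{lem:geometric} and in the definition of $V_1$ in Theorem~\ref{thm:main_aux}.
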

\begin{proof}
 Let $T_j$ be defined by (\ref{eqn:T_j}). Using the fact that computing expectations amounts to integration over $C_R$ (with respect to Lebesgue measure, normalized to one), in conjunction with $P_\Phi \psi_n = \psi_n$, one readily sees that
 \begin{eqnarray*}
 \left( \mathbb{E} (T_j) \right)_{k,\ell} & = & R^{-n} \int_{C_R} \psi_\ell (x) \overline{\psi_k(x)} dx \\
 & = &R^{-n} \langle Q_R \psi_\ell, \psi_k \rangle \\
 & = & R^{-n} \langle Q_R P_\Phi \psi_\ell, P_\Phi \psi_ k \rangle \\
 & = & R^{-n} \langle A_R \psi_\ell, \psi_k \rangle  \\
 & = & R^{-n} \lambda_{\ell} \delta_{\ell,k}
 \end{eqnarray*}
 and therefore
\[
\mathbb{E} (T_j)  =  R^{-n} \Delta_N~.
\]
 Furthermore, any unit vector $f \in \mathcal{P}_N$ is of the form $f = \sum_{n=1}^N c_n \psi_n$ with a unit vector $(c_n)_n$ of coefficients, and one obtains
 \begin{equation} \label{eqn:fxj}
  |f(x_j)|^2 = \langle c, T_j c \rangle~.
 \end{equation}
We thus have
 \begin{eqnarray*}
\lefteqn{  \inf_{f \in \mathcal{P}_N, \| f \|_2 = 1} \frac{1}{s} \sum_{j=1}^s \left( |f(x_j)|^2- R^{-n} \| Q_R f \|_2^2 \right)  = } \\
& = & \inf_{c \in \mathbb{C}^N, \| c \|_2 = 1 } \frac{1}{s} \sum_{j=1}^s \underbrace{\left(\langle c, T_j c \rangle - \langle c, \mathbb{E}(T_j) c \rangle \right)}_{= \langle c,X_j c \rangle}  \\
& = & \min \left\{ \rho~:~ \rho \mbox{ eigenvalue of } \frac{1}{s} \sum_{j=1}^s  X_j  \right\}~.
 \end{eqnarray*}
Now the statement of the theorem follows from the Bernstein
inequality for matrices formulated in Theorem \ref{bernstein}, with
proper constants provided by Lemma \ref{lem:exp_T_j}.
\end{proof}

\section{Proof of the main result}

It remains to transfer the random sampling statements from the spaces $\mathcal{P}_N$ to the set $V_{R,\delta}(\Phi)$. The following lemma is a first step in this direction,  by providing
a norm estimate for the projection onto $\mathcal{P}_N$,
for elements of $V_{R,\delta}(\Phi)$. It is an analog
of \cite[Lemma 5]{BaGro_12}.
\begin{lemma}
 \label{lem:B_vs_PN}
 Let $N \in \mathbb{N}$, and let $\gamma\in \mathbb{R}$ with $\lambda_N \ge \gamma \ge \lambda_{N+1}$. Let $E_N$ denote the orthogonal projection onto $\mathcal{P}_N$, and $F_N$ denote the projection onto the orthogonal complement. Then for all $f \in V_{R,\delta}(\Phi)$, we have
 \begin{eqnarray*}
  \| E_N f \|_2^2 & \ge &  \left( 1 - \frac{\delta}{1-\gamma} \right) \| f  \|_2^2~,
  \\ \| Q_R E_N f \|_2^2  & \ge & \gamma  \left( 1 - \frac{\delta}{1-\gamma} \right) \| f  \|_2^2~, \\
  \| F_N f \|_2^2 & \le  & \frac{\delta}{1-\gamma} \| f \|_2^2~.
 \end{eqnarray*}
 If $N=N(R) \not= 0$, these estimates simplify to
  \begin{eqnarray*}
  \| E_N f \|_2^2 & \ge &  (1 - 2 \delta ) \| f  \|_2^2~,
  \\ \| Q_R E_N f \|_2^2  & \ge &   \left( \frac{1}{2}- \delta  \right) \| f  \|_2^2~, \\
  \| F_N f \|_2^2 & \le &  2 \delta \| f \|_2^2~.
 \end{eqnarray*}

\end{lemma}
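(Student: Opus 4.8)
The plan is to work with the spectral decomposition of $A_R$ and exploit the defining concentration inequality of $V_{R,\delta}(\Phi)$. Fix $f \in V_{R,\delta}(\Phi)$, so that $P_\Phi f = f$ and $\|Q_R f\|_2^2 \ge (1-\delta)\|f\|_2^2$, equivalently $\langle A_R f, f\rangle = \|Q_R P_\Phi f\|_2^2 = \|Q_R f\|_2^2 \ge (1-\delta)\|f\|_2^2$. Write $f = E_N f + F_N f$ with $g = E_N f \in \mathcal{P}_N$ and $h = F_N f$ in the orthogonal complement of $\mathcal{P}_N$ inside $V(\Phi)$ (note $F_N f \in V(\Phi)$ since $\mathcal{P}_N \subset V(\Phi)$ and $f \in V(\Phi)$). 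Since the $\psi_n$ diagonalize $A_R$, I get $\langle A_R f, f\rangle = \langle A_R g, g\rangle + \langle A_R h, h\rangle$, with $\langle A_R g,g\rangle \le \lambda_1 \|g\|_2^2 \le \|g\|_2^2$ and $\langle A_R h, h\rangle \le \lambda_{N+1}\|h\|_2^2 \le \gamma \|h\|_2^2$ (using that $h$ lies in the span of eigenvectors with eigenvalues $\le \lambda_{N+1} \le \gamma$, together with $A_R \ge 0$).

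The first step, then, is the chain
\[
(1-\delta)\|f\|_2^2 \le \langle A_R f, f\rangle \le \|g\|_2^2 + \gamma \|h\|_2^2 = \|g\|_2^2 + \gamma(\|f\|_2^2 - \|g\|_2^2) = (1-\gamma)\|g\|_2^2 + \gamma \|f\|_2^2~.
\]
Rearranging gives $(1-\gamma)\|g\|_2^2 \ge (1-\delta-\gamma)\|f\|_2^2$, hence $\|E_N f\|_2^2 = \|g\|_2^2 \ge \frac{1-\delta-\gamma}{1-\gamma}\|f\|_2^2 = \left(1 - \frac{\delta}{1-\gamma}\right)\|f\|_2^2$, which is the first claimed inequality. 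The third follows immediately since $\|F_N f\|_2^2 = \|f\|_2^2 - \|E_N f\|_2^2 \le \frac{\delta}{1-\gamma}\|f\|_2^2$. For the second inequality, I estimate $\|Q_R E_N g\|_2^2$ from below: since $g \in \mathcal{P}_N$, expanding $g = \sum_{n=1}^N c_n \psi_n$ gives $\|Q_R g\|_2^2 = \langle Q_R P_\Phi g, P_\Phi g\rangle = \langle A_R g, g\rangle = \sum_{n=1}^N \lambda_n |c_n|^2 \ge \lambda_N \|g\|_2^2 \ge \gamma \|g\|_2^2$. Combining with the first inequality yields $\|Q_R E_N f\|_2^2 \ge \gamma\left(1 - \frac{\delta}{1-\gamma}\right)\|f\|_2^2$.

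For the simplified estimates when $N = N(R) \ne 0$, I use the definition of $N(R)$: by construction $\lambda_{N(R)} \ge 1/2 > \lambda_{N(R)+1}$, so I may take $\gamma = 1/2$, which satisfies $\lambda_N \ge \gamma \ge \lambda_{N+1}$. Then $\frac{\delta}{1-\gamma} = 2\delta$, and substitution into the three general inequalities gives $\|E_N f\|_2^2 \ge (1-2\delta)\|f\|_2^2$, $\|Q_R E_N f\|_2^2 \ge \frac12(1-2\delta)\|f\|_2^2 = \left(\frac12 - \delta\right)\|f\|_2^2$, and $\|F_N f\|_2^2 \le 2\delta\|f\|_2^2$, as claimed. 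I do not anticipate a serious obstacle here; the only point requiring a little care is the orthogonal splitting of $\langle A_R f, f\rangle$ — one must be sure that $F_N f$ really lies in the closed span of eigenvectors $\psi_n$ with $n > N$ (equivalently, that $A_R$ restricted to $V(\Phi)^\perp$ contributes nothing, which holds because $A_R$ annihilates $V(\Phi)^\perp$ and $f \in V(\Phi)$), so that the bound $\langle A_R h, h\rangle \le \gamma\|h\|_2^2$ is valid.
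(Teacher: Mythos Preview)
Your proof is correct and follows essentially the same route as the paper's: both use the concentration inequality to bound $\langle A_R f,f\rangle$ from below by $(1-\delta)\|f\|_2^2$, bound it from above by $\|E_Nf\|_2^2 + \gamma\|F_Nf\|_2^2$ via the spectral decomposition, and solve the resulting inequality; the second estimate then comes from $\|Q_R E_N f\|_2^2 = \langle A_R E_N f, E_N f\rangle \ge \gamma\|E_Nf\|_2^2$, and the specialization to $N=N(R)$ uses $\gamma=1/2$. The only cosmetic difference is that the paper normalizes $\|f\|_2=1$ and works with the scalar $A=\|E_Nf\|_2^2$, whereas you keep track of $g=E_Nf$ and $h=F_Nf$ throughout; your closing remark about why $\langle A_R h,h\rangle \le \gamma\|h\|_2^2$ is legitimate is a nice clarification that the paper leaves implicit.
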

\begin{proof}
 Let $f \in V_{R,\delta}(\Phi)$, w.l.o.g. $\| f \|_2 = 1$. Since $f = P_\Phi f$, we obtain
 \[
  1 - \delta \le \| Q_R f \|_2^2 = \| Q_R P_\Phi f \|_2^2 = \langle Q_R P_\Phi f, Q_R P_\Phi f \rangle =
  \langle A_R f , f \rangle = \sum_{j} |\langle f, \psi_j \rangle|^2 \lambda_j ~.
 \]
 Let $c_j = \langle f, \psi_j \rangle$, and define
 \[
  A = \| E_N f \|_2^2 = \sum_{j=1}^N | c_j|^2,
 \] and $B = 1- A = \| F_N f \|_2^2$. Then $\sum_{j=N+1}^\infty |c_j|^2 \le \| F_N f \|_2^2 = 1-A$. Using  $\gamma \ge \lambda_{N+1} \ge \lambda_{N+2} > ...$ and $\lambda_j \le 1$, we find
 \begin{eqnarray*}
  A & = & \sum_{j=1}^N |\langle f, \psi_j \rangle|^2 \ge \sum_{j=1}^N |\langle f, \psi_j \rangle|^2 \lambda_j  \\ & = & \sum_{j=1}^\infty |c_j|^2 \lambda_j - \sum_{j=N+1}^\infty |c_j|^2 \lambda_j \\
  & \ge & 1- \delta - \gamma \left( \sum_{j=n+1}^\infty |c_j|^2 \right)   \\
  & \ge & 1-\delta- \gamma(1-A)~.
 \end{eqnarray*}
Solving this inequality for $A$ yields $A \ge
1-\frac{\delta}{1-\gamma}$, which implies $B \le
\frac{\delta}{1-\gamma}$. Finally, $\gamma \le \lambda_N$ yields $\|
Q_R E_N f \|_2^2 = \sum_{j=1}^N \lambda_j |c_j|^2 \ge \gamma A \ge
\gamma (  1-\frac{\delta}{1-\gamma} )$.

For $N=N(R) \not= 0$, we may pick $\gamma = 1/2$, which results in
the estimates given for this case.
\end{proof}

With the estimates from Lemma \ref{lem:B_vs_PN}, the proof of the next lemma is a verbatim adaptation of the argument showing \cite[Lemma 7]{BaGro_12}, and therefore omitted.
\begin{lemma} \label{lem:geometric} Let $N \in \mathbb{N}$ and $\lambda_N \ge \gamma \ge \lambda_{N+1}$.
 Let $\{ x_j: j=1,\ldots, s \} \subset C_R$ have covering index $N_0$, and assume that the inequality
 \[
  \frac{1}{s} \sum_{j=1}^s \left( |p(x_j)|^2- R^{-n} \| Q_R p \|_2^2\right) \ge -\nu R^{-n} \| f  \|_2^2
 \] holds for all $p \in \mathcal{P}_N$. Then the inequality
 \begin{equation}
  \sum_{j=1}^s |f(x_j)|^2 \ge A\| f \|_2^2
 \end{equation}
 holds for all $f \in V_{R,\delta}(\Phi)$, with the constant
 \[
  A = \frac{s}{R^n} \left( \gamma - \frac{\gamma \delta}{1-\gamma} - \nu \right) - 2 C_2  N_0 \frac{\delta}{1-\gamma}~.
 \]
\end{lemma}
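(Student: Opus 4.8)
The plan is to decompose an arbitrary $f \in V_{R,\delta}(\Phi)$ as $f = p + g$, where $p = E_N f \in \mathcal{P}_N$ and $g = F_N f$, and then to bound $\sum_j |f(x_j)|^2$ from below by comparing it to $\sum_j |p(x_j)|^2$, controlling the error contributed by $g$ via the Plancherel--Polya inequality (A.2). The key point is the reverse-triangle-type estimate $|f(x_j)|^2 \ge |p(x_j)|^2 - 2|p(x_j)||g(x_j)| \ge (1-t)|p(x_j)|^2 - (t^{-1}-1)|g(x_j)|^2$ for any $t \in (0,1)$; summing over $j$ and optimizing (or, following \cite{BaGro_12}, choosing $t$ implicitly) gives
\[
 \sum_{j=1}^s |f(x_j)|^2 \ge \sum_{j=1}^s |p(x_j)|^2 - 2 \Big( \sum_{j=1}^s |p(x_j)|^2\Big)^{1/2}\Big( \sum_{j=1}^s |g(x_j)|^2\Big)^{1/2} + \sum_{j=1}^s |g(x_j)|^2.
\]

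First I would apply the hypothesis to $p \in \mathcal{P}_N$, which after multiplying by $s$ gives $\sum_{j=1}^s |p(x_j)|^2 \ge s R^{-n}\big( \|Q_R p\|_2^2 - \nu \|f\|_2^2 \big)$. The term $\|Q_R p\|_2^2 = \|Q_R E_N f\|_2^2$ is bounded below by $\gamma(1 - \tfrac{\delta}{1-\gamma})\|f\|_2^2$ using the second estimate of Lemma \ref{lem:B_vs_PN}, so $\sum_j |p(x_j)|^2 \ge s R^{-n}\big( \gamma - \tfrac{\gamma\delta}{1-\gamma} - \nu\big)\|f\|_2^2$. Next I would bound the $g$-contribution from above: since $\{x_j\}$ has covering index $N_0$ and $g = F_N f \in V(\Phi)$, assumption (A.2) gives $\sum_{j=1}^s |g(x_j)|^2 \le C_2 N_0 \|g\|_2^2 = C_2 N_0 \|F_N f\|_2^2 \le C_2 N_0 \tfrac{\delta}{1-\gamma}\|f\|_2^2$, using the third estimate of Lemma \ref{lem:B_vs_PN}. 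Also $\sum_j |p(x_j)|^2 \le C_2 N_0 \|p\|_2^2 \le C_2 N_0 \|f\|_2^2$ by (A.2) again (or simply $\le \sum_j |f(x_j)|^2$ after the fact, but the cleaner route is (A.2)).

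Then I would assemble: writing $P := \sum_j |p(x_j)|^2$ and $G := \sum_j |g(x_j)|^2$, the displayed inequality reads $\sum_j |f(x_j)|^2 \ge P - 2\sqrt{PG} + G = (\sqrt{P} - \sqrt{G})^2 \ge P - 2\sqrt{PG}$. Using $\sqrt{PG} \le \sqrt{C_2 N_0 \|f\|_2^2}\cdot \sqrt{C_2 N_0 \tfrac{\delta}{1-\gamma}\|f\|_2^2} = C_2 N_0 \sqrt{\tfrac{\delta}{1-\gamma}}\|f\|_2^2$, and dropping $G \ge 0$, one gets $\sum_j |f(x_j)|^2 \ge \big[ \tfrac{s}{R^n}(\gamma - \tfrac{\gamma\delta}{1-\gamma} - \nu) - 2 C_2 N_0 \sqrt{\tfrac{\delta}{1-\gamma}} \big]\|f\|_2^2$. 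To recover exactly the stated constant $A = \tfrac{s}{R^n}(\gamma - \tfrac{\gamma\delta}{1-\gamma} - \nu) - 2 C_2 N_0 \tfrac{\delta}{1-\gamma}$, one instead keeps the full square $(\sqrt P - \sqrt G)^2$ and uses that the cross term can be absorbed more carefully — specifically, bounding $2\sqrt{PG} - G = G\big(2\sqrt{P/G}-1\big)$ is awkward, so the cleaner bookkeeping (as in \cite{BaGro_12}) is: from $\sum_j|f(x_j)|^2 \ge (\sqrt P - \sqrt G)^2$ and $\sqrt P \ge \sqrt{\tfrac{s}{R^n}(\gamma-\tfrac{\gamma\delta}{1-\gamma}-\nu)}\,\|f\|_2$ versus $\sqrt G \le \sqrt{C_2 N_0 \tfrac{\delta}{1-\gamma}}\,\|f\|_2$, deduce $\sum_j|f(x_j)|^2 \ge \big(\tfrac{s}{R^n}(\gamma-\tfrac{\gamma\delta}{1-\gamma}-\nu) - 2 C_2 N_0 \tfrac{\delta}{1-\gamma}\big)\|f\|_2^2$ by the elementary inequality $(a-b)^2 \ge a^2 - b^2$ applied with $a^2,b^2$ the two bounds (valid once $a \ge b$, which holds under the smallness hypotheses on $\delta,\nu$). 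The main obstacle is precisely this last step: matching the exact form of $A$ requires tracking the cross term so that it collapses into the linear-in-$\delta$ correction $-2C_2 N_0 \tfrac{\delta}{1-\gamma}$ rather than a $\sqrt\delta$ term, and verifying that the positivity hypotheses in place (e.g. $A>0$ in Theorem \ref{thm:main}) ensure the relevant quantities under the square roots are ordered correctly so the elementary bound applies. Since the statement says this is a verbatim adaptation of \cite[Lemma 7]{BaGro_12}, the bookkeeping there can be transcribed directly once the three inputs — the hypothesis applied to $p$, Lemma \ref{lem:B_vs_PN}, and (A.2) for $g$ — are in hand.
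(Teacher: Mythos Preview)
Your overall strategy is exactly the one the paper defers to (it omits the proof, citing \cite[Lemma 7]{BaGro_12} verbatim): decompose $f=E_Nf+F_Nf=p+g$, feed $p$ into the hypothesis, invoke Lemma~\ref{lem:B_vs_PN} for $\|Q_RE_Nf\|_2^2$ and $\|F_Nf\|_2^2$, and control the $g$-samples by the Plancherel--Polya bound (A.2). All three inputs are identified and used correctly, and you also correctly spot that the right-hand side of the hypothesis should carry $\|p\|_2^2$ rather than $\|f\|_2^2$ (a typo in the statement; since $\|p\|_2\le\|f\|_2$ your use is fine).

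The gap is in the final assembly. The inequality you invoke, ``$(a-b)^2\ge a^2-b^2$, valid once $a\ge b$,'' is false in that direction: $(a-b)^2\ge a^2-b^2$ is equivalent to $2b^2\ge 2ab$, i.e.\ to $b\ge a$. Likewise, $(a-b)^2\ge a^2-2b^2$ would require $3b\ge 2a$. In the regime of interest one has $G'\ll P'$ (small $\delta$), so neither version applies. Tracing your argument honestly, what you actually obtain from $F\ge(\sqrt P-\sqrt G)^2$ together with $P\le C_2N_0\|f\|_2^2$ and $G\le C_2N_0\tfrac{\delta}{1-\gamma}\|f\|_2^2$ is
\[
\sum_{j=1}^s|f(x_j)|^2 \;\ge\; \frac{s}{R^n}\Big(\gamma-\frac{\gamma\delta}{1-\gamma}-\nu\Big)\|f\|_2^2 \;-\; 2\,C_2N_0\sqrt{\tfrac{\delta}{1-\gamma}}\,\|f\|_2^2,
\]
i.e.\ a $\sqrt{\delta/(1-\gamma)}$ correction, strictly weaker than the stated linear term $2C_2N_0\,\delta/(1-\gamma)$. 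No choice of the AM--GM parameter $t$ in $2\sqrt{PG}\le tP+t^{-1}G$ recovers the linear-in-$\delta$ constant either. So as written your proposal does not prove the lemma with the constant $A$ as stated; to close this you must either import the exact cross-term bookkeeping from \cite[Lemma 7]{BaGro_12} (which the paper relies on but does not reproduce) or accept the weaker $\sqrt{\delta}$ constant, which would still suffice for Theorem~\ref{thm:main} after adjusting the smallness conditions on $\delta$.
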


A further ingredient is the following tail estimate for the covering
number of a random set.
\begin{lemma}\label{covernumber}\cite[Lemma 8]{BaGro_12} Suppose $R\geq 2$
and $\{x_j : j=1,\cdots , s\}$ are independent and identically
distributed random variables that are uniformly distributed over
$C_R$. Let $a> R^{-n}$. Then \[\mathbb{P}(N_0 > as)\leq (R+2)^n
exp\big(-s(a \log{(aR^n)}-(a-R^{-n}))\big),\] where $N_0 =\max_{k\in
\mathbb{Z}^n}{\rm card}( \{x_j\}\cap(k+[-1/2, 1/2]^n)).$
\end{lemma}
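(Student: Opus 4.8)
The statement is a classical occupancy/tail bound for the maximal bin count of $s$ i.i.d.\ uniform points in $C_R$, so I would reduce it to a per-bin Chernoff estimate together with a union bound. First I would fix a grid bin $k+[-1/2,1/2]^n$ with $k\in\ZZ^n$ and note that since each $x_j$ is uniformly distributed over $C_R$, the number of $x_j$ landing in that bin is a sum $S_k=\sum_{j=1}^s Y_j$ of i.i.d.\ Bernoulli variables with success probability $p=R^{-n}$ (the measure of a unit cube divided by the measure of $C_R$; here one uses $R\ge 2$, or more precisely $R$ large enough that the unit bins relevant to $C_R$ actually sit inside $C_R$, which is where the factor $(R+2)^n$ for the number of bins comes from). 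So $\mathbb{E}(S_k)=sp=sR^{-n}$.

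The second step is the exponential Markov/Chernoff bound for $S_k$. For any $\theta>0$, $\mathbb{P}(S_k\ge as)\le e^{-\theta as}\,\mathbb{E}(e^{\theta S_k}) = e^{-\theta as}\bigl(1-p+pe^{\theta}\bigr)^s \le \exp\bigl(-\theta as + sp(e^{\theta}-1)\bigr)$, using $1+x\le e^x$. Optimizing over $\theta$ — i.e.\ choosing $e^{\theta}=a/p=aR^n$, which is $>1$ precisely because of the hypothesis $a>R^{-n}$ — gives $\mathbb{P}(S_k\ge as)\le \exp\bigl(-s\bigl(a\log(aR^n)-(a-R^{-n})\bigr)\bigr)$, exactly the exponent in the claim. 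This is the standard relative-entropy form of the Chernoff bound for the binomial upper tail; the only thing to watch is that $a>R^{-n}$ makes the exponent positive, so the bound is nontrivial.

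The third step is the union bound. The event $\{N_0>as\}$ is the union over the relevant bins $k$ of the events $\{S_k>as\}$. The number of bins $k+[-1/2,1/2]^n$ that meet $C_R=[-R/2,R/2]^n$ is at most $(\lfloor R\rfloor+2)^n\le (R+2)^n$, so $\mathbb{P}(N_0>as)\le (R+2)^n\max_k\mathbb{P}(S_k\ge as)$, and plugging in the Chernoff estimate from the previous step yields the stated inequality.

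**Main obstacle.** None of the individual steps is hard — the Chernoff computation is routine and the union bound is immediate. The only genuinely delicate point is bookkeeping: getting the count of relevant grid cells to come out as $(R+2)^n$ (rather than, say, $(R+1)^n$ or $(\lfloor R\rfloor+1)^n$) and making sure that for every such cell the landing probability is \emph{exactly} $R^{-n}$ or can be uniformly bounded by it — cells straddling the boundary of $C_R$ have smaller intersection measure, so their probabilities are $\le R^{-n}$, which is fine for an upper tail bound, but one must phrase $p=R^{-n}$ as an upper bound on $\mathbb{P}(Y_j=1)$ and check the Chernoff inequality is monotone in $p$ in the right direction. Since this is quoted verbatim as \cite[Lemma 8]{BaGro_12}, I would in practice simply cite it, but the above is the self-contained argument.
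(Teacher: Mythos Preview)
The paper does not prove this lemma at all: it is quoted verbatim from \cite[Lemma~8]{BaGro_12} and simply cited. Your self-contained argument --- per-bin Chernoff bound for a binomial upper tail, optimized to the relative-entropy exponent, followed by a union bound over the at most $(R+2)^n$ unit cubes meeting $C_R$ --- is exactly the standard proof (and is how the original reference argues), and your remark about boundary cells having landing probability $\le R^{-n}$, with the moment generating function bound monotone in $p$, handles the only subtlety correctly.
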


\begin{thm} \label{thm:main_aux}
 Let $(x_j)_{j \in \mathbb{N}}$ be a sequence of independent  random variables,
 each uniformly distributed in $C_R$. Assume that $R \ge \max(1,\sqrt[\alpha]{2 C_3},\sqrt[n]{C_1^2})$, and furthermore
 \[
  \delta < \frac{1}{2(1+12 C_2 )}~, \nu < \frac{1}{2} - \delta (1+12 C_2 )~.
 \] Then, for any $s \in \mathbb{N}$,
 \[
  A = \frac{s}{R^n} \left( \frac{1}{2} - \delta - \nu - 12 \delta C_2  \right)
 \] is strictly positive, and the sampling estimate
 \begin{equation} \label{eqn:samp_ineq}
  A \| f \|_2^2 \le \sum_{j=1}^s |f(x_j)|^2 \le s  C_1^2 \| f  \|_2^2~,~ \forall f \in V_{R,\delta}(\Phi)
 \end{equation}
holds with probability at least
\begin{equation} \label{eqn:prob_est_aux}
 1- R^{n^2/\alpha'} \beta^n \exp \left( -\frac{\nu^2 s}{C_1^2 R^n (1+\nu/3)} \right) - (R+2)^n \exp \left( - \frac{s}{R^n} (3 \log 3 -2) \right)~.
\end{equation} Here  $\beta$ is defined by (\ref{eqn:beta}).
\end{thm}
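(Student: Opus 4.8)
The plan is to combine the three preceding results---Theorem \ref{thm:samp_PN} (random sampling in $\mathcal{P}_N$), Lemma \ref{lem:geometric} (the transfer from $\mathcal{P}_N$ to $V_{R,\delta}(\Phi)$ under a covering-index hypothesis), and Lemma \ref{covernumber} (the covering-index tail bound)---via a union bound. The guiding idea is that two ``bad events'' can spoil the lower sampling estimate: either the empirical lower bound for elements of $\mathcal{P}_N$ fails (controlled by Theorem \ref{thm:samp_PN}), or the random sample set has too large a covering index (controlled by Lemma \ref{covernumber}). On the complement of both, Lemma \ref{lem:geometric} delivers the desired inequality on $V_{R,\delta}(\Phi)$, and the upper bound $\sum_j |f(x_j)|^2 \le s C_1^2 \|f\|_2^2$ holds deterministically from assumption (A.1).

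First I would set $N = N(R)$ and $\gamma = 1/2$; the hypothesis $R \ge \max(1,\sqrt[\alpha]{2 C_3})$ and Lemma \ref{lem:decay_eigenvals} guarantee $0 < N(R) \le \beta^n R^{n^2/\alpha'}$, and the simplified estimates of Lemma \ref{lem:B_vs_PN} become available. Next, apply Lemma \ref{lem:geometric} with $\gamma = 1/2$: the constant there becomes $A = \frac{s}{R^n}\bigl(\tfrac12 - \delta - \nu - 2\delta\bigr) - 2 C_2 N_0 \cdot 2\delta = \frac{s}{R^n}\bigl(\tfrac12 - 3\delta - \nu\bigr) - 4 C_2 N_0 \delta$; to reach the stated $A = \frac{s}{R^n}\bigl(\tfrac12 - \delta - \nu - 12\delta C_2\bigr)$ I would impose the covering-index bound $N_0 \le 3 s R^{-n}$, so that $4 C_2 N_0 \delta \le 12 \delta C_2 s R^{-n}$, and absorb the discrepancy between $3\delta$ and $\delta$ by noting $2\delta \le 8\delta C_2$ (using $C_2 \ge 1$; indeed $C_2 \ge 1$ follows by testing (A.2) on a single-point set). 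I would then invoke Lemma \ref{covernumber} with $a = 3 R^{-n}$: the exponent $a\log(aR^n) - (a - R^{-n}) = 3R^{-n}\log 3 - 2 R^{-n} = R^{-n}(3\log 3 - 2)$ gives exactly the second exponential term in (\ref{eqn:prob_est_aux}), with prefactor $(R+2)^n$ (note $R \ge 2$ is ensured since $R_0 \ge 1$ is not quite enough---but $3 R^{-n} > R^{-n}$ holds and the lemma's standing hypothesis $R \ge 2$ should be checked or the statement reconciled with it). Simultaneously, I would apply Theorem \ref{thm:samp_PN} with $N = N(R)$: the hypothesis $\frac{\nu^2}{C_1^2(1+\nu/3)} \le 3\log 3 - 2$ is not needed here but the probability bound $N \exp\bigl(-\frac{\nu^2 s}{C_1^2 R^n(1+\nu/3)}\bigr) \le \beta^n R^{n^2/\alpha'}\exp\bigl(-\frac{\nu^2 s}{C_1^2 R^n(1+\nu/3)}\bigr)$ gives the first exponential term, using $N(R) \le \beta^n R^{n^2/\alpha'}$.

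Having both good events, I would verify that on their intersection the hypotheses of Lemma \ref{lem:geometric} are met: the empirical inequality for all $p \in \mathcal{P}_N$ with the constant $-\nu R^{-n}\|f\|_2^2$ is precisely the complement of the event in Theorem \ref{thm:samp_PN} (after noting $\|Q_R p\|_2 \le \|p\|_2$ so the bound transfers), and the covering index is at most $3 s R^{-n}$. The union bound then yields that both events hold with probability at least the quantity in (\ref{eqn:prob_est_aux}), and on that event (\ref{eqn:samp_ineq}) holds. Finally, the positivity of $A$ follows directly from the two smallness hypotheses on $\delta$ and $\nu$. The main obstacle---or rather the only point requiring genuine care---is the bookkeeping in reconciling the constant produced by the mechanical substitution $\gamma = 1/2$ into Lemma \ref{lem:geometric} with the cleaner form claimed in the theorem; this rests on the elementary observation $C_2 \ge 1$, which should be stated explicitly, together with the choice $a = 3R^{-n}$ in Lemma \ref{covernumber}, which is exactly what makes the exponent collapse to $R^{-n}(3\log 3 - 2)$. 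Everything else is a routine union bound over the two failure modes.
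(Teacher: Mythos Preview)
Your overall strategy is exactly the paper's: set $N=N(R)$, $\gamma=1/2$, define the two bad events (failure of the empirical lower bound on $\mathcal{P}_N$ and covering index exceeding $3sR^{-n}$), bound their probabilities via Theorem~\ref{thm:samp_PN} (with $N(R)\le\beta^n R^{n^2/\alpha'}$ from Lemma~\ref{lem:decay_eigenvals}) and Lemma~\ref{covernumber} with $a=3R^{-n}$, and on the complement invoke Lemma~\ref{lem:geometric}. The upper bound is deterministic from (A.1). This is precisely what the paper does.

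The one concrete error is in your substitution $\gamma=1/2$ into the constant of Lemma~\ref{lem:geometric}. You write $\frac{s}{R^n}(\tfrac12-\delta-\nu-2\delta)=\frac{s}{R^n}(\tfrac12-3\delta-\nu)$, but in fact $\frac{\gamma\delta}{1-\gamma}=\frac{(1/2)\delta}{1/2}=\delta$, so the bracket is simply $\tfrac12-\delta-\nu$. The constant from Lemma~\ref{lem:geometric} is therefore
\[
\frac{s}{R^n}\Bigl(\tfrac12-\delta-\nu\Bigr)-4C_2 N_0\,\delta,
\]
and imposing $N_0\le 3sR^{-n}$ gives exactly $\frac{s}{R^n}\bigl(\tfrac12-\delta-\nu-12\delta C_2\bigr)$, the stated $A$, with no discrepancy to absorb. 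Your proposed fix via $C_2\ge 1$ is both unnecessary and, as written, goes the wrong way: it would only show your (erroneous) lower constant is bounded above by something involving the target, not below. Once the arithmetic is corrected, the argument is complete and matches the paper line for line.
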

\begin{proof}
Define the random variable $N_0$ as the covering index of $x_1,\ldots,x_s$.
Fix $N = N(R)$, and consider the events
 \[
  V_1 =  \left\{ \inf_{f \in \mathcal{P}_N, \| f \|_2 = 1} \frac{1}{s} \sum_{j=1}^s \left( |f(x_j)|^2 - R^{-n} \| Q_R f \|_2^2 \right) \le - \nu R^{-n} \right\}~
 \] and
 \[
  V_2 = \left\{ N_0 \ge 3 R^{-n} \right\}~.
 \]
By Lemma \ref{lem:geometric}, we have for all $(x_1,\ldots,x_s)$ in the complement of $V_1 \cup V_2$ and all $f \in V_{R,\delta}(\Phi)$,
\[
 \frac{1}{s} \sum_{j=1}^s |f(x_j)|^2 \ge A \| f \|_2^2~,
\] where we used $\gamma = 1/2$ (due to our choice of $N=N(R)$) to simplify $A$ to the constant occurring in (\ref{eqn:samp_ineq}).

Theorem \ref{thm:samp_PN} combined with Lemma \ref{lem:decay_eigenvals} yields that $V_1^c$ occurs with probability at most
\[
 \beta^n R^{n^2/\alpha'} \exp \left( -\frac{\nu^2 s}{C_1 ^2 R^n (1+\nu/3)} \right)~.
\] Furthermore, Lemma \ref{covernumber} yields that $V_2^c$ occurs with probability at most
\[
 (R+2)^n \exp \left( - s R^{-n} (3 \log 3 - 2) \right)~.
\]
Thus the lower estimate in (\ref{eqn:samp_ineq}) occurs at least with the probability given in (\ref{eqn:prob_est_aux}), whereas the upper estimate follows from the definition of $C_1$.
\end{proof}

{\em Proof of Theorem \ref{thm:main}.}

The requirements on the various quantities guarantee the applicability of Theorem \ref{thm:main_aux}.
Furthermore, for $R\ge 1$, we find that by assumptions on $R$ and $\nu$,
\begin{eqnarray} \nonumber
 \lefteqn{\frac{(R+2)^n \exp\left(-\frac{s}{R^n} (3 \log 3 -2)\right)}{\beta^n R^{n^2/\alpha'} \exp \left(-\frac{s \nu^2}{R^n C_1^2 (1+\nu/3)} \right)}} \\ & \le & \underbrace{\frac{3^n R^{n-n^2 /\alpha'}}{\beta^n}}_{\le 1,\ since\ \beta>3} \underbrace{ \exp\left( - \frac{s}{R^n} \left(  3 \log 3 -2 - \frac{\nu^2}{ C_1^2 (1+\nu/3)} \right) \right) }_{\le 1} \nonumber \\
 & \le 1 ~. \label{eqn:comp_subterms}
\end{eqnarray}

Hence, as soon as
\[
s \ge R^n  \frac{1+\nu/3}{\nu^2} \log \frac{2 \beta^n
R^{n^2/\alpha'}}{\epsilon}~,
\] the first term subtracted in (\ref{eqn:prob_est_aux}) is $\le \epsilon/2$, and greater or equal to the second term, by (\ref{eqn:comp_subterms}). The theorem is proved.
\hfill $\Box$

%\bibliography{FuehrXian_relevant_sampling.bib}
%\bibliographystyle{plain}

\end{document}